\renewcommand{\baselinestretch}{1.0}
 \def\@evenhead{\hbox to\textwidth{\footnotesize\rm\thepage \hfill
  {\it Li, Y. and Yao Q.}}} % authors name
 \def\@oddhead{\hbox to \textwidth{\footnotesize{\it
Probabilities of deviations for record numbers} \hfill\thepage}}% abbreviate title
 \renewcommand{\section}{\makeatletter
 \renewcommand{\@seccntformat}[1]{{\csname the##1\endcsname.}\hspace{0.45em}}
 \makeatother \@startsection
{section}%                                            the name
{1}%                                                  the level
{0pt}%                                                the indent
{0.25\baselineskip}%                                      the beforeskip
{0.25\baselineskip}%                                   the afterskip
{\normalsize\bfseries\mathversion{bold}}}
\newtheorem{theorem}{\noindent Theorem}[section]
\newtheorem{lemma}{\noindent Lemma}[section]
\newtheorem{corollary}{\noindent Corollary}[section]
\newtheorem{remark}{\noindent Remark}[section]
 \def\beqlb{\begin{eqnarray}}\def\eeqlb{\end{eqnarray}}
 \def\beqnn{\begin{eqnarray*}}\def\eeqnn{\end{eqnarray*}}
 \def\beqls{\begin{equation}}\def\eeqls{\end{equation}}
 \numberwithin{equation}{section}
\def\qed{\hfill$\square$\smallskip}
\def\ZZ{{\mathbb Z}}
\def\bfE{{\mathbb{E}}}
\def\bfP{{\mathbb{P}}}
\def\e{\mathrm{e}}
\def\ii{\mathsf{i}}
\begin{document}

\title{\Large\bf Large and moderate deviations for record numbers in some non--nearest neighbor random walks}
\author{Yuqiang LI\footnote{Key Laboratory of Advanced Theory and Application in Statistics and Data Science--MOE, School of Statistics, East China Normal University.}~~and Qiang YAO\footnote{Corresponding author. Key Laboratory of Advanced Theory and Application in Statistics and Data Science--MOE, School of Statistics, East China Normal University and NYU--ECNU Institute of Mathematical Sciences at NYU Shanghai.
                      E-mail: qyao@sfs.ecnu.edu.cn.}
                      }
\date{}

\maketitle

\begin{abstract}

\noindent The deviation principles of record numbers in random walk models have not been completely investigated, especially for the non--nearest neighbor cases. In this paper, we derive the asymptotic probabilities of large and moderate deviations for the number of ``weak records''~(or ``ladder points'') in two kinds of one--dimensional non--nearest neighbor random walks. The proofs depend only on the direct analysis of random walks. We illustrate that the traditional method of analyzing the local time of Brownian motions, which is often adopted for the simple random walks, would lead to wrong conjectures for our cases.

\smallskip

\noindent {\bf Keywords}:\;{\small Non--nearest neighbor random walks; weak record numbers; large deviations principle; moderate deviations principle}

\smallskip

\noindent{\bf AMS 2000 Subject Classification:}\;{\small 60F10, 60G50}
\end{abstract}

%\newpage

\bigskip

\section{Introduction}

The word ``record'' can be referred as an extreme attainment. The study of record statistics has become indispensable in different fields. In this paper, we are interested in the asymptotic properties of record numbers in random walks as the number of steps tends to infinity, and aim to study the deviations between the record numbers and their asymptotic limits.

\medskip

Let $S=\{S_n,~n\geq 0\}$ be an integer--valued random walk on $\ZZ$~(may be non--nearest neighbor), namely, $S_{_0}=0$, and $S_n=\sum\limits_{i=1}^n X_i$ for $n\geq 1$,
where $X_{_1}$, $X_{_2}$, $\cdots$ are i.i.d. integer--valued random variables. Define $M_n=\max\limits_{0\leq m\leq n}S_m$ for $n\geq1$.
Let $T_0=0$, $T_{n}=\inf\{m>T_{n-1}, S_m\geq M_{m-1}\}~\text{for}~n\geq1$,
and define
 \beqlb\label{def-2}
 A_n=\sup\{k\geq 1, \quad T_k\leq n\}
 \eeqlb
for each $n\geq 1$, where $\inf\emptyset\stackrel{\text{def}}{=}+\infty$ and $\sup\emptyset\stackrel{\text{def}}{=}0$. In this paper, we call $A_n$ the \emph{weak} record numbers up to time $n$. The word ``weak'' is to emphasize that we not only consider the time when a new record appears, but also keep eyes on the time when the current record is repeated. It is important to note that the weak record number we are considering now is different from the ``record numbers'' studied in Katzenbeisser and Panny \cite{KP92},  Kirschenhofer and Prodinger \cite{KP96}, P\u{a}tt\u{a}nea \cite{P2007}, where they discussed the number of the events $\{S_k=M_n\}$~(rather than $\{S_k=M_k\}$) that occur up to time $n$.

\medskip

In some literatures, $A_n$ is also called the number of ``weak ladder points'' which is a footstone in the fluctuation theory of random walks. The fluctuation theory was first proposed by Spitzer \cite{sp64} and Feller \cite{Feller71}, and has drawn much attention since then because of its wide applications. For more details, one can refer to Karlin and Taylor \cite[Chapter 17]{KT81}.  Omey and Teugels \cite{GOT82} proved that a normalized version of the bivariate ladder process $\{(T_n, S_{T_n})\}$ converges in law to the bivariate ladder process of a L\'{e}vy process $X$ whenever the normalized $\{S_n\}$ converges in law to $X$. As an immediate corollary, one can derive that a normalized version of $A_n$~(number of ladder points) of $S$ converges in distribution to the local time at the supremum of $X$. Later, Chaumont and Doney \cite{CD2010} extended this result to a more general case. Based on the above results, one may further ask about the deviations between the normalized version of $A_n$ and its limit. As far as we know, there is very few related research to investigate such problems.

\medskip

In this paper, we study the asymptotic probabilities of $\bfP(A_n\geq \sqrt{n}c_n)$, where $c_n$ tends to infinity under some constraints. We will establish the large deviations principle (LDP) and moderate deviations principle (MDP) for $A_n$, respectively. For a general theory of LDP and MDP, please refer to Dembo and Zeitouni \cite{DZ98}.

\medskip

Let $Y_k=T_{k}-T_{k-1}$ for $k\geq 1$. The strong Markov property of random walks implies that $Y_k$'s are i.i.d, and $A_n=\sup\left\{k, \sum\limits_{i=1}^k Y_i\leq n\right\}$. Namely, $\{A_n\}_{n\geq 1}$ is a discrete time renewal process with the inter--occurrence time sequence $\{Y_n\}$.
There are many results on the theory of deviations for renewal processes or renewal reward processes. See, for example, Serfozo \cite{SR74}, Glynn and Whitt \cite{GW94}, Jiang \cite{J1994}, Chi \cite{C2007}, Lefevere et al. \cite{LMZ11}, Borovkov and Mogulskii \cite{BM2015}, Tsirelson \cite{T2013}, Logachov and Mogulskii \cite{LM2018}, and the references therein. However, these proposed approaches cannot be applied directly to our case, since most of them require constraints on moments or moment generating functions for inter-occurrence times, which are not fulfilled by $A_n$ in our situations.

\medskip

By adopting the celebrated invariance principle, one may naturally connect $A_n$ with the maximal value process $B^*(n)$ of a Brownian motion when the increments of random walk $S$ have finite variance, and conceive that we can get the LDP and MDP of $A_n$ by extending the asymptotical results for $2B^*(n)$. However, our results (see Theorem \ref{ldp} and Corollary \ref{thmmdp-2}) show that this method would lead to wrong conjectures. Instead, in this paper, we investigate the LDP and MDP for $A_n$ via the deviation theory of occupation time of Markov process as well as some analysis on the related queueing models. This is our main contribution.

\medskip

The remainder of this paper is organized as follows. In Section 2, we summarize the main results of this paper, and highlight our main contributions.
In Section 3, we provide some results for queueing models, which are crucial for the analysis of left or right continuous random walks. Then we establish the LDP in Section 4 and the MDP in Section 5. Finally, we make some concluding remarks in Section 6.

\section{Statement of main results}

Let $S$ be the random walk as defined in Section 1. We say $S$ is \emph{right continuous} if the probability mass function (p.m.f) of $X_i$ satisfies
\beqlb\label{rc-0}
0<q=\bfP(X_i=1),\qquad \bfP(X_i=-n)=p_n, \;\; n\geq 0.
\eeqlb
Similarly, we say $S$ is \emph{left continuous} if the p.m.f of $X_i$ satisfies
\beqlb\label{lc-0}
0<q=\bfP(X_i=-1),\qquad \bfP(X_i=n)=p_n, \;\; n\geq 0.
\eeqlb
The notions of ``right continuous'' and ``left continuous'' first appeared in Spitzer \cite{sp64}. Let $$\phi(s)=q+\sum\limits_{n=0}^\infty p_n s^{n+1},$$
for $s\in [0, 1]$. For convenience, in the sequel, we call the random walk $S$ is right or left continuous with $\phi$ if the p.m.f of its increments has the form of (\ref{rc-0}) or (\ref{lc-0}), respectively.

Obviously, for each $s\in[0, 1]$, the equation $x=s\phi(x)$ has a solution $x_s\in[0, 1]$. We denote the minimum non-negative solution by $h(s)$, which will be discussed in more details in Lemma 3.1 later. For every $\lambda\in(-\infty, 0]$, let
 $$\Lambda_r(\lambda)=\ln\left(1+q\e^\lambda-\frac{q\e^\lambda}{h(\e^\lambda)}\right)~~\text{and}~~\Lambda_l(\lambda)=\lambda+\ln\left(\frac{1-\phi(h(\e^\lambda))}{1-h(\e^\lambda)}\right).$$
As shown in Lemma \ref{lemlambda} in the next section, $\Lambda^\prime_r(\lambda)$ and $\Lambda^\prime_l(\lambda)$ are  continuous monotone functions with  $\Lambda_r^\prime(0)=\Lambda_l^\prime(0)=+\infty$ and $\Lambda_r^\prime(-\infty)=\Lambda_l^\prime(-\infty)=1$. Therefore, for any $x\in(1, +\infty)$, there exist unique $\lambda_l,\lambda_r\in[-\infty,0)$ such that $x=\Lambda_r^\prime(\lambda_r)$ and $x=\Lambda_l^\prime(\lambda_l)$. Denote  $\lambda_l$, $\lambda_r$ by $G_l(x)$ and $G_r(x)$, respectively. For each $x\geq 0$,  define
 $$\Lambda_r^*(x)=\sup_{\lambda\leq 0}\{x\lambda-\Lambda_r(\lambda)\}=\begin{cases} xG_r(x)-\Lambda_r(G_r(x)), \quad & x>1,
\\ -\ln (q+p_{_0}), & x=1,
\\ +\infty, & x<1.
\end{cases}$$
and
$$\Lambda_l^*(x)=\sup_{\lambda\leq 0}\{x\lambda-\Lambda_l(\lambda)\}=\begin{cases} xG_l(x)-\Lambda_l(G_l(x)), \quad & x>1,
\\ -\ln (1-q),  & x=1,
\\ +\infty, & x<1.
\end{cases}$$

Let $A_n$ be the weak record number of $S$ up to time $n$, which is defined by (\ref{def-2}). We have the following LDP for $A_n$.

% i.e.  $$S_n=\sum_{i=1}^n X_i,$$
%for all $n\geq 1$ and $S_0=0$, where $(X_i)_{i\geq 1}$ are i.i.d.with
%  $$\bfP(X_i=1)=\bfP(X_i=-1)=1/2.$$

\begin{theorem}\label{ldp}  Suppose $\phi^\prime(1)=1$. For any $x>0$, if $S$ is right continuous with $\phi$, then
 \beqnn
 \lim_{n\to +\infty}\frac{1}{n}\ln\bfP(A_n\geq x n)= -x\Lambda_r^*(1/x).
 \eeqnn
If $S$ is left continuous with $\phi$, then
 \beqnn
 \lim_{n\to +\infty}\frac{1}{n}\ln\bfP(A_n\geq x n)= -x\Lambda_l^*(1/x).
 \eeqnn
\end{theorem}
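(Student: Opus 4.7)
The plan is to exploit that $\{A_n\}$ is a discrete-time renewal counting process. The strong Markov property at successive weak ladder times makes the gaps $Y_k = T_k - T_{k-1}$ i.i.d., and the renewal duality $\{A_n \ge k\} = \{\sum_{i=1}^k Y_i \le n\}$ gives
$$\bfP(A_n \geq xn) = \bfP\!\left(\frac{1}{\lceil xn\rceil}\sum_{i=1}^{\lceil xn\rceil}Y_i \leq \frac{n}{\lceil xn\rceil}\right),$$
so the whole statement reduces to a one-sided Cram\'er-type LDP for i.i.d.\ partial sums of $Y_i$. The argument has two steps: (i) identify the log-moment generating function $\Lambda(\lambda) := \ln\bfE[e^{\lambda T_1}]$ with $\Lambda_r$ or $\Lambda_l$; (ii) apply the lower-tail Cram\'er theorem and let $n\to\infty$.

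For step (i) in the right continuous case I condition $T_1$ on $X_1$: either $X_1 \in \{0,1\}$ (so $T_1 = 1$), or $X_1 = -k$ with $k\ge 1$ (so $T_1 = 1 + \tau_k$, where $\tau_k$ is the first hitting time of $[0,\infty)$ from $-k$). Right continuity forces the walk to climb through each intermediate integer, so $\tau_k$ is a sum of $k$ i.i.d.\ copies of $\tau_1$. A one-step decomposition of $\tau_1$ produces the fixed-point equation $f(s) = s\phi(f(s))$, whose minimal non-negative solution is $h(s)$ by definition. Substituting back and using $h(s) = s\phi(h(s))$ yields
$$\bfE[s^{T_1}] = 1 + sq\!\left(1 - \frac{1}{h(s)}\right),$$
which upon $s = e^\lambda$ is exactly $\exp(\Lambda_r(\lambda))$. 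In the left continuous case I will rely on the queueing-type representation developed in Section 3; the corresponding analysis produces $\bfE[s^{T_1}] = (s - h(s))/(1 - h(s)) = \exp(\Lambda_l(\lambda))$.

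For step (ii), even though $\phi^\prime(1)=1$ leaves $\bfE[T_1] = \infty$, the paper's Lemma \ref{lemlambda} shows that $\Lambda_r$ and $\Lambda_l$ are finite on $(-\infty, 0]$, so the standard lower-tail Cram\'er theorem applies:
$$\lim_{k \to \infty}\frac{1}{k}\ln\bfP\!\left(\frac{1}{k}\sum_{i=1}^k Y_i \le y\right) = -\Lambda^*(y), \qquad y \in [1,+\infty).$$
Taking $k = \lceil xn\rceil$ and $y = n/\lceil xn\rceil \to 1/x$, continuity of $\Lambda^*$ on $(1,\infty)$ together with the explicit boundary value $\Lambda^*(1)$ deliver $n^{-1}\ln\bfP(A_n \ge xn) \to -x\Lambda^*(1/x)$ for $x \in (0,1]$. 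For $x>1$ the probability is $0$ because $A_n \le n$, matching $\Lambda^*(1/x) = +\infty$.

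The main obstacle is the left continuous case: a weak-record step may overshoot $M_{m-1}$, so the hitting-time decomposition of $T_1$ no longer closes into a single functional equation in $h$ via elementary passage arguments. The queueing-model results of Section 3 are introduced precisely to handle this, yielding the clean closed form of $\bfE[s^{T_1}]$ required above. A minor secondary issue is checking that the rounding $\lceil xn\rceil$ does not perturb the exponential rate, which follows from local Lipschitz continuity of $x \mapsto x\Lambda^*(1/x)$ on compact subsets of $(0,1]$.
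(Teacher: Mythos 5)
Your proposal is correct and follows essentially the same route as the paper: reduce to the renewal duality $\{A_n\ge k\}=\{T_k\le n\}$, identify $\bfE[s^{T_1}]$ with $f_{_0}(s)$ from Lemmas \ref{lemrc-1} and \ref{lemlc-1} (your direct first-step/skip-free decomposition in the right continuous case reproduces exactly the fixed-point equation $h(s)=s\phi(h(s))$ and the formula $1+sq-sq/h(s)$ of Lemma \ref{lemrc-1}), and then apply the lower-tail Cram\'er theorem together with the continuity of $\Lambda^*$ to pass from $k=\lceil xn\rceil$, $y=n/\lceil xn\rceil$ to the stated limit. The paper likewise invokes Cram\'er's theorem with Lemmas \ref{lemlambda} and \ref{lemlimit} and defers the same final limit-interchange step to the proof of Theorem 2 in \cite{GZ98}, so no substantive difference remains.
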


\medskip

To facilitate our discussion in MDP, we need the following technical assumption.

\noindent{\bf Assumption} (H): There exist $\alpha\in(0,1)$ and $c>0$  such that $\displaystyle{\lim_{s\to 1-} \frac{1-s\phi^\prime(h(s))}{(1-s)^\alpha}=c}$.% where $h(s)$ will be defined in Lemma 3.1 later.

\medskip

The MDP for $A_n$ is as follows.

\begin{theorem}\label{thmmdp-1} Suppose $\phi^\prime(1)=1$ and Assumption (H) holds. Let $\{c_n\}$ be a sequence of positive numbers such that $c_n\to+\infty$ and $c_n=o(n)$.
\begin{itemize}
\item[(1)] If $S$ is right continuous with $\phi$, then for any $x>0$,
 \beqnn
 \lim_{n\to +\infty}\frac{1}{c_n}\ln\bfP(A_n\geq x {n}^{1-\alpha}c_n^{\alpha})= -\frac{\alpha}{1-\alpha}\left(\frac{q}{c}\right)^{1/\alpha}x^{1/\alpha}.
 \eeqnn
\item[(2)] If $S$ is left continuous with $\phi$, then for any $x>0$,
 \beqnn
 \lim_{n\to +\infty}\frac{1}{c_n}\ln\bfP(A_n\geq x {n}^{\alpha}c_n^{1-\alpha})= -\left[c(1-\alpha)^{2-\alpha}\alpha^{\alpha}\right]^{1/(1-\alpha)}x^{1/(1-\alpha)}.
 \eeqnn
 \end{itemize}
\end{theorem}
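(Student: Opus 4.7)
The plan is to exploit the identification $\{A_n\geq k_n\}=\{\sum_{i=1}^{k_n}Y_i\leq n\}$ to reduce the moderate deviation for $A_n$ to a Cram\'er-type estimate for the renewal sum, and then to convert this back through Legendre duality, using Assumption (H) to pin down the leading coefficients. Here $\Lambda_r$ and $\Lambda_l$ play the role of $\log\bfE\e^{\lambda Y_1}$ for $\lambda\leq 0$ in the right- and left-continuous cases (the same identification that underlies Theorem \ref{ldp}).

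First I would extract the asymptotics of $h(s)$ as $s\to 1^-$. Differentiating $h(s)=s\phi(h(s))$ yields $h'(s)=\phi(h(s))/(1-s\phi'(h(s)))$; combining $\phi'(1)=1$, $\phi(h(s))\to 1$ with Assumption (H) gives $h'(s)\sim (c(1-s)^\alpha)^{-1}$, whence by integration
$$1-h(s)\sim \frac{(1-s)^{1-\alpha}}{c(1-\alpha)},\qquad s\to 1^-.$$
Setting $s=\e^\lambda$ in the definitions of $\Lambda_r$ and $\Lambda_l$ and expanding the logarithms yields
$$\Lambda_r(\lambda)\sim -\frac{q\,(-\lambda)^{1-\alpha}}{c(1-\alpha)},\qquad \Lambda_l(\lambda)\sim -c(1-\alpha)(-\lambda)^{\alpha},\qquad \lambda\to 0^-.$$
Inverting $\Lambda'$ to obtain $G$ and substituting into $\Lambda^*(y)=yG(y)-\Lambda(G(y))$ then gives, as $y\to\infty$,
$$\Lambda_r^*(y)\sim \frac{\alpha}{1-\alpha}\Bigl(\frac{q}{c}\Bigr)^{1/\alpha}y^{-(1-\alpha)/\alpha},\qquad \Lambda_l^*(y)\sim \bigl[c(1-\alpha)^{2-\alpha}\alpha^{\alpha}\bigr]^{1/(1-\alpha)}y^{-\alpha/(1-\alpha)}.$$
A direct check shows that in both regimes $k_n\Lambda^*(n/k_n)/c_n$ converges to the announced constant.

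For the upper bound, Chernoff's inequality applied to $\bfP\bigl(\sum_{i=1}^{k_n}Y_i\leq n\bigr)$ gives for each $\lambda\geq 0$ the estimate $\exp(\lambda n+k_n\Lambda(-\lambda))$, and optimizing over $\lambda\geq 0$ produces $\exp(-k_n\Lambda^*(n/k_n))$, which upon dividing by $c_n$ supplies the desired upper bound. For the lower bound I would choose $\lambda_n^*=-G(n/k_n)>0$ and introduce the tilted law $\widetilde\bfP$ of $Y_1$ with density $\e^{-\lambda_n^* y}/\bfE\e^{-\lambda_n^* Y_1}$, so that $\widetilde\bfE Y_1=n/k_n$. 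Then
$$\bfP\Bigl(\sum_{i=1}^{k_n}Y_i\leq n\Bigr)\geq \e^{-k_n\Lambda^*(n/k_n)}\cdot\e^{-\lambda_n^*\Delta_n}\cdot\widetilde\bfP\Bigl(n-\Delta_n\leq \sum_{i=1}^{k_n}Y_i\leq n\Bigr)$$
for a window $\Delta_n$ of order $\sqrt{k_n\Lambda''(-\lambda_n^*)}$, and a Chebyshev estimate (using $\widetilde{\mathrm{Var}}(Y_1)=\Lambda''(-\lambda_n^*)$ obtained by differentiating the expansions above) bounds the tilted probability away from zero. A direct computation gives $\lambda_n^*\sim c_n/n$ and $\Delta_n\sim n/\sqrt{c_n}$, so the penalty $\lambda_n^*\Delta_n=O(\sqrt{c_n})=o(c_n)$, delivering the matching lower bound.

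The main obstacle will be the analytic control of $\Lambda_r,\Lambda_l$: promoting the leading-order Tauberian-type asymptotics for $h(s)$ and $\Lambda(\lambda)$ into enough regularity of $G(y)$ and $\Lambda''(-\lambda_n^*)$ to justify the substitutions above uniformly along the moderate deviation scale, so that the tilted window $\Delta_n$ can be chosen wide enough for Chebyshev to succeed yet narrow enough for the penalty $\lambda_n^*\Delta_n$ to remain subexponential in $c_n$.
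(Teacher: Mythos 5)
Your route is genuinely different from the paper's, and your constants all check out. The paper never returns to the renewal representation at the MDP stage: it writes $A_n=\sum_{k\le n}\mathbf{1}_{\{\bar S_k=0\}}$ as a Markovian occupation time, uses Lemma \ref{mdplem-1} together with a Tauberian theorem to show that $\sum_{k\le n}\bfP(\bar S_k=0\mid\bar S_0=0)$ is regularly varying of index $1-\alpha$ (resp.\ $\alpha$), and then invokes Chen's moderate-deviation theorem for occupation times (Lemma \ref{c01}) as a black box, so that the whole proof reduces to matching normalizations $a(t)$, $b_n$ and the constant $-(1-p)\bigl(p^p\lambda/\Gamma(p+1)\bigr)^{1/(1-p)}$. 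You instead run a self-contained Cram\'er analysis on $\{A_n\ge k_n\}=\{\sum_{i\le k_n}Y_i\le n\}$: your asymptotics $1-h(s)\sim(1-s)^{1-\alpha}/(c(1-\alpha))$, $\Lambda_r(\lambda)\sim -q(-\lambda)^{1-\alpha}/(c(1-\alpha))$, $\Lambda_l(\lambda)\sim -c(1-\alpha)(-\lambda)^{\alpha}$, the resulting expressions for $\Lambda_r^*$, $\Lambda_l^*$ at infinity, and the limit of $k_n\Lambda^*(n/k_n)/c_n$ all agree with the theorem's constants. What your approach buys is elementarity (no appeal to Chen's theorem or to the Tauberian theorem) and a transparent Legendre duality with Theorem \ref{ldp}; what the paper's buys is brevity, at the cost of hiding where the constants come from.

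The one genuine gap is the one you flag yourself. Assumption (H) is a first-order condition: via the explicit formulas for $\Lambda'$ in Lemma \ref{lemlambda} it does control $\Lambda'(\lambda)$ near $0^-$, but it does not by itself yield the bound $\Lambda''(-\lambda_n^*)=O\bigl((-\lambda_n^*)^{-1-\alpha}\bigr)$ that your Chebyshev step requires --- one cannot differentiate an asymptotic equivalence a second time for free. A clean repair that avoids second moments entirely: tilt at a slightly suboptimal $\lambda_n$ chosen so that $k_n\Lambda'(\lambda_n)\le(1-\vep)n$, apply the one-sided Markov inequality $\widetilde\bfP\bigl(\sum_{i\le k_n}Y_i>n\bigr)\le k_n\widetilde\bfE Y_1/n\le 1-\vep$ to bound the tilted probability below by $\vep$, absorb the resulting $O(\vep)$ loss in the exponent, and let $\vep\downarrow0$ at the end. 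With that substitution (or with a direct second-order analysis of $h$), your sketch closes.
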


By applying Theorem \ref{thmmdp-1} to some special cases, we obtain the following corollaries.

\begin{corollary}\label{thmmdp-2} Suppose $\phi^\prime(1)=1$ and $\sigma:=\sqrt{\phi^{\prime\prime}(1)}<+\infty$. Let $\{c_n\}$ be a sequence of positive numbers such that $c_n\to+\infty$ and $c_n=o(n)$ as $n$ tends to infinity.
\begin{itemize}
\item[(1)] If $S$ is right continuous with $\phi$, then for any $x>0$,
 \beqnn
 \lim_{n\to +\infty}\frac{1}{c_n}\ln\bfP(A_n\geq x {n}^{1/2}c_n^{1/2})= -\frac{q^2x^2}{2\sigma^2}.
 \eeqnn
\item[(2)] If $S$ is left continuous with $\phi$, then for any $x>0$,
 \beqnn
 \lim_{n\to +\infty}\frac{1}{c_n}\ln\bfP(A_n\geq x {n}^{1/2}c_n^{1/2})= -\frac{\sigma^2}{8}x^2.
 \eeqnn
 \end{itemize}
\end{corollary}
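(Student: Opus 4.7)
The plan is to recognize that Corollary \ref{thmmdp-2} is essentially a specialization of Theorem \ref{thmmdp-1}, obtained by verifying Assumption (H) with $\alpha=1/2$ and identifying the constant $c$ in terms of $\sigma$. Concretely, I would argue that the finite second moment assumption $\phi''(1)<\infty$ together with $\phi'(1)=1$ forces $\alpha=1/2$ and $c=\sigma\sqrt{2}$, and then plug these values into the rate functions in Theorem \ref{thmmdp-1}.

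First I would analyze the function $h$. Since $\phi'(1)=1$ and $\phi$ is convex on $[0,1]$ with $\phi(1)=1$, the minimal non--negative solution of $x=\phi(x)$ is $h(1)=1$. To obtain the behavior of $h(s)$ near $s=1$, I would set $u=1-s$ and $v=1-h(s)$ and expand $\phi(1-v)=1-v+\tfrac{\sigma^{2}}{2}v^{2}+o(v^{2})$ in the defining equation $1-v=(1-u)\phi(1-v)$. Equating coefficients yields
\[
u=\tfrac{\sigma^{2}}{2}v^{2}+o(v^{2}),\qquad\text{hence}\qquad 1-h(s)\sim\frac{\sqrt{2(1-s)}}{\sigma}\quad\text{as }s\to 1-.
\]
Differentiating $\phi$ gives $\phi'(1-v)=1-\sigma^{2}v+o(v)$, so $s\phi'(h(s))=(1-u)(1-\sigma^{2}v+o(v))=1-\sigma^{2}v+o(v)$ since $u=O(v^{2})$. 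Therefore
\[
\lim_{s\to 1-}\frac{1-s\phi'(h(s))}{(1-s)^{1/2}}=\sigma^{2}\cdot\lim_{s\to 1-}\frac{v}{(1-s)^{1/2}}=\sigma\sqrt{2},
\]
which verifies Assumption (H) with $\alpha=1/2$ and $c=\sigma\sqrt{2}$.

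Having established these parameters, I would substitute into Theorem \ref{thmmdp-1}. For the right continuous case, $n^{1-\alpha}c_n^{\alpha}=n^{1/2}c_n^{1/2}$ and the rate becomes $\frac{\alpha}{1-\alpha}(q/c)^{1/\alpha}x^{1/\alpha}=(q/(\sigma\sqrt{2}))^{2}x^{2}=q^{2}x^{2}/(2\sigma^{2})$. For the left continuous case, $n^{\alpha}c_n^{1-\alpha}=n^{1/2}c_n^{1/2}$ and the rate becomes $[c(1-\alpha)^{2-\alpha}\alpha^{\alpha}]^{1/(1-\alpha)}x^{1/(1-\alpha)}=[\sigma\sqrt{2}\cdot(1/2)^{3/2}(1/2)^{1/2}]^{2}x^{2}=\sigma^{2}x^{2}/8$, matching both stated limits.

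The only genuine piece of work is the expansion around $s=1$, and the main subtlety there is to check that the $o(v^{2})$ remainder in $\phi(1-v)$ is really controlled under the mere assumption $\phi''(1)<\infty$ (as opposed to a higher-order moment hypothesis). I would justify this via Taylor's theorem with integral remainder applied to the generating function $\phi$, using the monotone convergence of $\sum n(n-1)p_n s^{n-1}$ up to $s=1$ to pass to the limit. Once this routine verification is in place, no further probabilistic argument is needed, and the corollary follows.
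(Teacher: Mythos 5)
Your proposal is correct and follows essentially the same route as the paper: the paper derives the corollary by verifying Assumption (H) with $\alpha=1/2$ and $c=\sqrt{2}\sigma$ (this is Lemma \ref{lemma1}(4) and Remark \ref{remark1}) and then substituting into Theorem \ref{thmmdp-1}, exactly as you do, and your arithmetic for both rate constants checks out. The only cosmetic difference is that the paper obtains $\lim_{s\to 1-}(1-s\phi'(h(s)))/\sqrt{1-s}=\sqrt{2}\sigma$ by L'Hospital's rule applied to $(1-s\phi'(h(s)))^2/(1-s)$ together with the identity $\phi(h(s))=(1-s\phi'(h(s)))h'(s)$, whereas you use a direct Taylor expansion in $u=1-s$, $v=1-h(s)$.
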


To investigate the case of $\phi^{\prime\prime}(1)=+\infty$, we satisfy ourselves by studying the special case of $\phi(s)=s+\frac{\gamma}{1+\beta}(1-s)^{1+\beta}$.
\begin{corollary}\label{thmmdp-3} Suppose $\phi(s)=s+\frac{\gamma}{1+\beta}(1-s)^{1+\beta}$ where $\gamma,\beta\in (0, 1)$.  Let $\{c_n\}$ be a sequence of positive numbers such that $c_n\to+\infty$ and $c_n=o(n)$ as $n$ tends to infinity.
\begin{itemize}
 \item[(1)] If $S$ is right continuous with $\phi$, then for any $x>0$,
 \beqnn
 \lim_{n\to +\infty}\frac{1}{c_n}\ln\bfP(A_n\geq x {n}^{1/(1+\beta)}c_n^{\beta/(1+\beta)})= -\frac{\beta\gamma}{(1+\beta)^{2+1/\beta}}x^{1+1/\beta}.
 \eeqnn
\item[(2)] If $S$ is left continuous with $\phi$, then for any $x>0$,
 \beqnn
 \lim_{n\to +\infty}\frac{1}{c_n}\ln\bfP(A_n\geq x {n}^{\beta/(1+\beta)}c_n^{1/(1+\beta)})= -\frac{\gamma \beta^{\beta}}{(1+\beta)^{2+\beta}}x^{1+\beta}.
 \eeqnn
 \end{itemize}
\end{corollary}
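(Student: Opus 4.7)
The plan is simply to specialize Theorem \ref{thmmdp-1} to the given $\phi(s) = s + \frac{\gamma}{1+\beta}(1-s)^{1+\beta}$: verify its two hypotheses, read off the constants $q$, $\alpha$, $c$ that appear in the rate functions, and simplify. From $\phi^\prime(s) = 1 - \gamma(1-s)^\beta$ we get $\phi^\prime(1) = 1$, and from $\phi(s) = q + \sum_{n\geq 0} p_n s^{n+1}$ we read off $q = \phi(0) = \gamma/(1+\beta)$. A quick binomial expansion of $(1-s)^{1+\beta}$ confirms that the implied $p_n$'s are non-negative, so $\phi$ genuinely encodes a p.m.f.

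The core step is the asymptotic behavior of $h(s)$ near $s = 1$. Setting $u = 1 - h(s)$ and $v = 1 - s$, the fixed-point equation $h(s) = s\phi(h(s))$ rearranges to
\[
(1-u)\,v = (1-v)\,\frac{\gamma}{1+\beta}\, u^{1+\beta}.
\]
Since $h(s) \to 1$ as $s \to 1^-$, the leading-order balance gives $v \sim \frac{\gamma}{1+\beta} u^{1+\beta}$, hence
\[
1 - h(s) \sim \left(\frac{(1+\beta)(1-s)}{\gamma}\right)^{1/(1+\beta)} \qquad \text{as } s \to 1^-.
\]
Substituting into $1 - s\phi^\prime(h(s)) = (1-s) + s\gamma(1-h(s))^{\beta}$ and observing that the second term, of order $(1-s)^{\beta/(1+\beta)}$, dominates the first because $\beta/(1+\beta) < 1$, Assumption~(H) is verified with
\[
\alpha = \frac{\beta}{1+\beta} \in \left(0, \tfrac{1}{2}\right), \qquad c = \gamma^{1/(1+\beta)}(1+\beta)^{\beta/(1+\beta)}.
\]

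Finally I would plug these values into Theorem \ref{thmmdp-1}. Since $1 - \alpha = 1/(1+\beta)$, the scaling factors $n^{1-\alpha}c_n^{\alpha}$ and $n^{\alpha}c_n^{1-\alpha}$ and the exponents $x^{1/\alpha} = x^{1+1/\beta}$ and $x^{1/(1-\alpha)} = x^{1+\beta}$ match the statement. For part~(1), the coefficient $\frac{\alpha}{1-\alpha}(q/c)^{1/\alpha}$ reduces to $\beta(q/c)^{(1+\beta)/\beta}$; computing $q/c$ from the formulas above and raising to the power $(1+\beta)/\beta$ yields $\beta\gamma/(1+\beta)^{2+1/\beta}$. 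For part~(2), $c(1-\alpha)^{2-\alpha}\alpha^{\alpha}$ becomes $\gamma^{1/(1+\beta)} \beta^{\beta/(1+\beta)} (1+\beta)^{-(2+\beta)/(1+\beta)}$, and raising to the power $1/(1-\alpha) = 1+\beta$ gives $\gamma\beta^\beta/(1+\beta)^{2+\beta}$. The only mildly delicate point is the implicit-function step producing the leading asymptotic of $h(s)$ (the singular exponent $1+\beta$ forces a non-smooth balance); everything else is elementary bookkeeping with exponents.
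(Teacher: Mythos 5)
Your proposal is correct, and at the top level it follows the same route as the paper: verify Assumption~(H) for this $\phi$, read off $\alpha=\beta/(1+\beta)$, $c=\gamma^{1/(1+\beta)}(1+\beta)^{\beta/(1+\beta)}$ and $q=\phi(0)=\gamma/(1+\beta)$, and substitute into Theorem~\ref{thmmdp-1}; your exponent bookkeeping for both rate constants checks out. The one place where you genuinely diverge from the paper is in how Assumption~(H) is verified, which is the content of the paper's Lemma~\ref{lemstable}. There, the authors first establish the identity $\frac{1-s}{1-h(s)}=M(s)(1-h(s))^{\beta}$ with $M(s)=s\gamma/[(1+\beta)h(s)]$, and then substitute it into itself repeatedly, producing geometric series in the exponents and arriving at the \emph{exact} formula $1-s\phi^\prime(h(s))=(1+\beta h(s))M(s)^{1/(1+\beta)}(1-s)^{\beta/(1+\beta)}$ valid for all $s\in(0,1)$, from which the limit is immediate. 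You instead extract only the leading asymptotics: rewriting $h(s)=s\phi(h(s))$ as $(1-u)v=(1-v)\frac{\gamma}{1+\beta}u^{1+\beta}$ with $u=1-h(s)$, $v=1-s$, using $h(s)\to 1$ (Lemma~\ref{lemma1}) to get $1-h(s)\sim\left(\frac{(1+\beta)(1-s)}{\gamma}\right)^{1/(1+\beta)}$, and then noting that in $1-s\phi^\prime(h(s))=(1-s)+s\gamma(1-h(s))^{\beta}$ the second term dominates. Your balance argument is shorter and more transparent for the purpose at hand, since only the limit in Assumption~(H) is needed; the paper's iteration buys an exact non-asymptotic identity but is not required for the corollary. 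Your additional check that the binomial coefficients of $(1-s)^{1+\beta}$ yield non-negative $p_n$ (using $\gamma<1$ for $p_0=1-\gamma\geq 0$) is a point the paper leaves implicit.
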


\begin{remark}
When $\phi^\prime(1)=1$ and $\sigma^2=\phi^{\prime\prime}(1)<+\infty$, the expectation and the variance of $X_i$ are $0$ and $\sigma^2$, respectively. In this case, by the strong invariance principle, $S$ is approximated by a Brownian motion with variance parameter $\sigma^2$, whether $S$ is right continuous or left continuous. However, as indicated in Theorem \ref{ldp} and Corollary \ref{thmmdp-2}, the right or left continuity of random walk $S$ leads to different rate functions for the LDP and MDP of $A_n$. These observations show that for the problems investigated here, it would lead to wrong conjectures by simply extending the asymptotic results of Brownian motions to random walks via the invariance principle.
\end{remark}
\begin{remark}
It is well known that if $Y_k$'s are i.i.d. with the same probability generating functions $\phi(s)$, then as $n\rightarrow\infty$,
   $$\bfE\left(\e^{\ii t\sum\limits_{k=1}^n (Y_k-1)/n^{1/(1+\beta)}}\right)\to\exp\left\{-\frac{|\cos((1+\beta)\pi/2)|\gamma}{1+\beta}|t|^{1+\beta}\left(1-\ii{\rm sgn}(t)\tan\left(\frac{(1+\beta)\pi}{2}\right)\right)\right\},$$
 which is the characteristic function of a $(1+\beta)$-stable distribution, saying $U$, without negative jumps. Therefore the distribution determined by $\phi(s)$ belongs to the domain of attraction of the stable distribution $U$. Furthermore, as
shown by Skorohod~(1957), $$\frac{1}{n^{1/(1+\beta)}}\sum_{k=1}^{\lfloor nt\rfloor}(Y_k-1)$$ converges weakly in the Skorohod space $D([0,1])$ with $J_1$ topology to a L\'{e}vy stable motion $L(t)$ whose distribution at $t=1$ is $U$, where $\lfloor a\rfloor$ denotes the
maximal integer no larger than $a$. As a result, if $S$ is left or right continuous with $\phi$, then $S_{\lfloor nt\rfloor}/n^{1/(1+\beta)}$ converges weakly in $D([0,1])$ to $L(t)$ or $-L(t)$, respectively.
\end{remark}

\section{Some results for queueing models}

Our main approach in analyzing the property of left continuous and right continuous random walks is to relate them to the queueing models.

\medskip

Let $p_{_{-1}}, p_{_0}, p_{_1},\cdots$ be a sequence of non-negative real numbers such that $\sum\limits_{n=-1}^{+\infty} p_n=1$ and $p_{_0}<1$. Let $W=\{W_n; n\geq 0\}$ be a Markov chain with transition probabilities $(p_{i,j})_{i,j\geq 0}$, where
 \beqlb\label{rc-1}
 p_{i,j}=\begin{cases} p_k, \quad &j=i+k, k\geq -1, i\geq 1;
\\ p_k, & j=k, i=0, k\geq 1;
\\ p_{_0}+p_{_{-1}}, & j=i=0;
\\ 0, &\text{otherwise}.
\end{cases}
\eeqlb
or
\beqlb\label{lc-1}
 p_{i,j}=\begin{cases} p_{i-j}, \quad &i\geq 1, 0<j\leq i;
\\ \sum\limits_{k=i}^{+\infty} p_k, & i\geq 0, j=0;
\\ p_{_{-1}}, & j=i+1, i\geq 0;
\\ 0, &\text{otherwise}.
\end{cases}
\eeqlb

Intuitively, in a service system with one server, if $p_{i,j}$ has the form of (\ref{lc-1}), $W$ is the length of the waiting line when a new customer enters the service system, where $p_k$ denotes the probability of exactly $k+1$ customers served in an inter-arrival period. If $p_{i,j}$ has the form of (\ref{rc-1}), $W$ is the length of the waiting line~(excluding the customer in service) when a customer leaves the service system, where $q_k$ is the probability of exactly $k+1$ customers arriving in a service period.

The following lemmas are crucial to the proof of our main results. They are of independent interest as well. Although they seem to be some fundamental conclusions for the process $W$, we cannot find a suitable reference. For the convenience of reference, we provide their detailed proofs in the following. Let $\phi(s)=\sum\limits_{n=-1}^{+\infty} p_n s^{n+1}$ for $s\in[0, 1]$. Then we have the following result.
\begin{lemma}\label{lemma1}
If $\phi^\prime(1)=1$, then there exists a unique differentiable function $h(s)\in[0, 1]$ such that $h(s)=s\phi(h(s))$ for all $s\in[0, 1]$. Furthermore, $h(s)$ has the following properties:
\begin{itemize}
\item[(1)] $h(s)/s\to \phi(0)=p_{_{-1}}$ as $s\to 0+$, and $h(s)\to 1$ as $s\to 1-$;
\item[(2)] $h^\prime(s)=\frac{\phi(h(s))}{1-s\phi^\prime(h(s))}>0$ for all $s\in(0, 1)$;
\item[(3)] $(h(s)-p_{_{-1}}s)/s^2\to p_{_0}p_{_{-1}}$ as $s\to 0+$;
\item[(4)] if $\sqrt{\phi^{\prime\prime}(1)}=\sigma<+\infty$, then $\lim\limits_{s\to 1-} \frac{1-s\phi^\prime(h(s))}{\sqrt{1-s}}=\sqrt{2}\sigma$.
\end{itemize}
\end{lemma}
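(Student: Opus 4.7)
The whole lemma revolves around analyzing the fixed-point equation $x = s\phi(x)$ via the convexity of $\phi$ on $[0,1]$, which holds since $\phi$ is a power series with nonnegative coefficients, with $\phi(1) = 1$ and $\phi'(1) = 1$. For $s \in [0,1)$ I would set $F_s(x) := s\phi(x) - x$: convexity together with the sign pattern $F_s(0) = sp_{_{-1}} \geq 0$, $F_s(1) = s-1 < 0$, and $F_s'(1) = s - 1 < 0$ forces a unique zero in $[0, 1]$, which I would call $h(s)$. The differentiability of $h$ and the formula (2) then come out of the implicit function theorem applied to $F(s, x) = x - s\phi(x)$, since $F_x(s, h(s)) = 1 - s\phi'(h(s))$ is strictly positive on $[0, 1)$ because the graph of $s\phi$ crosses the diagonal from above at $h(s)$.

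For the limits in (1), as $s \downarrow 0$ the bound $h(s) \leq s\phi(1) = s$ forces $h(s) \to 0$, hence $h(s)/s = \phi(h(s)) \to \phi(0) = p_{_{-1}}$; as $s \uparrow 1$, any subsequential limit $x^{*}$ of $h(s)$ solves $x^{*} = \phi(x^{*})$, and the critical convexity argument (together with monotonicity of $h$) forces $x^{*} = 1$. For (3), plugging the first-order expansion $h(s) = p_{_{-1}} s + o(s)$ from (1) back into $h(s) = s\phi(h(s)) = s\bigl(p_{_{-1}} + p_{_0} h(s) + O(h(s)^2)\bigr)$ immediately gives $h(s) = p_{_{-1}} s + p_{_0} p_{_{-1}} s^2 + o(s^2)$. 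For (4), I would write $u = u(s) := 1 - h(s)$ and Taylor-expand $\phi(1 - u) = 1 - u + (\sigma^2/2)\, u^2 + o(u^2)$; substituting into $1 - u = s\phi(1 - u)$ and matching the $1-u$ terms on the two sides simplifies the equation to $(1 - s)(1 - u) = (s\sigma^2/2)\, u^2 + o(u^2)$, from which $u \sim \sqrt{2(1-s)}/\sigma$. Using $\phi'(1-u) = 1 - \sigma^2 u + o(u)$, one then obtains $1 - s\phi'(h(s)) = (1-s) + s\sigma^2 u + o(u) \sim \sqrt{2}\,\sigma\,\sqrt{1-s}$, which is the claim.

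I expect (4) to be the most delicate step: the $1 - u$ contributions cancel between the two sides of the fixed-point equation, so one has to work consistently at second order in $u$ to recover the order $u \sim \sqrt{2(1-s)}/\sigma$. Once that is in hand, the remaining work is to notice that the $(1-s)$ piece of $1 - s\phi'(h(s))$ is of strictly smaller order than $s\sigma^2 u$ as $s \uparrow 1$. Everything else (existence, uniqueness, differentiability, and the expansion at $s = 0$) is routine once the convex-function framework is in place.
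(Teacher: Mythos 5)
Your proposal is correct, and it is slightly more self-contained than the paper's own argument, though the underlying ingredients are the same. For existence, uniqueness and the formula in (2), the paper simply appeals to ``basic facts in mathematical analysis,'' whereas you actually supply the convexity argument ($F_s$ convex with $F_s(0)>0$, $F_s(1)<0$, hence a unique zero where the graph crosses the diagonal from above, giving $1-s\phi'(h(s))>0$ and the implicit function theorem); this is the right justification and worth recording. The real divergence is in (3) and (4): the paper proves both by L'Hospital's rule, for (4) computing $\lim_{s\to 1-}(1-s\phi'(h(s)))^2/(1-s)$ directly with the help of the identity $\phi(h(s))=(1-s\phi'(h(s)))h'(s)$, and never touches the asymptotics of $1-h(s)$ itself. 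You instead expand the fixed-point equation: at $s=0$ the substitution $h(s)=p_{_{-1}}s+o(s)$ back into $h=s\phi(h)$ gives (3) in one line, and at $s=1$ the second-order Taylor expansion in $u=1-h(s)$ yields $u\sim\sqrt{2(1-s)}/\sigma$ and then (4). Both routes are elementary and rely on the same second-order information about $\phi$ at the endpoints; yours has the small advantage of producing the asymptotics of $1-h(s)$ as a by-product (which is in fact what Lemma 3.6 needs in the left-continuous case), at the cost of having to justify the Peano-form Taylor expansions at $s=1$ via the continuity of $\phi''$ on $[0,1]$ when $\phi''(1)<\infty$. Your identification of (4) as the delicate step, because the first-order terms cancel, matches where the paper also has to work hardest.
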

\begin{proof}  Applying the intermediate value theorem to the function $x-s\phi(x)$ with the variable $x$ and noting its monotonicity in $[0, 1]$, we can readily know that there exists a unique function $h(s)\in [0, 1]$ such that $h(s)=s\phi(h(s))$ for all $s\in[0, 1]$.  As a result, the implicit function theorem leads to (1) and (2). Next, we give the detailed proof for (3) and (4).

 \medskip

 To prove (3), we use the L'Hospital rule and the formula of $h^\prime(s)$ in (2) to obtain that
 \beqnn
 \lim_{s\to 0+}\frac{h(s)-p_{_{-1}}s}{s^2}&=&\lim_{s\to 0+}\frac{h^\prime(s)-p_{_{-1}}}{2s}=\lim_{s\to 0+}\frac{\phi(h(s))-p_{_{-1}}(1-s\phi^\prime(h(s)))}{2(1-s\phi^\prime(h(s)))s}
 \\&=&\lim_{s\to 0+}\frac{\phi^\prime(h(s))h^\prime(s)+p_{_{-1}}\phi^\prime(h(s))+p_{_{-1}}s\phi^{\prime\prime}(h(s))h^\prime(s)}{2(1-s\phi^\prime(h(s)))-2s(\phi^\prime(h(s))+s\phi^{\prime\prime}(h(s))h^\prime(s))}.
 \eeqnn
 Note that as $s\to0$, $h(s)\to 0$, $\phi^\prime(s)\to p_{_0}$ and $h^\prime(s)\to p_{_{-1}}$. We obtain that
 \beqnn
 \lim_{s\to 0+}\frac{h(s)-p_{_{-1}}s}{s^2}&=&\lim_{s\to 0+}\frac{\phi^\prime(h(s))h^\prime(s)+p_{_{-1}}\phi^\prime(h(s))}{2}=p_{_{-1}}p_{_0}.
 \eeqnn

 \medskip

 To prove (4), by the L'Hospital rule again,
 \beqnn
 \lim_{s\to 1-} \frac{(1-s\phi^\prime(h(s)))^2}{1-s}&=&\lim_{s\to 1-} 2(1-s\phi^\prime(h(s)))(\phi^\prime(h(s))+s\phi^{\prime\prime}(h(s))h^\prime(s))
 \\&=&\lim_{s\to 1-} 2(1-s\phi^\prime(h(s)))\phi^{\prime\prime}(h(s))h^\prime(s).
 \eeqnn
From (2), we have that $\phi(h(s))=(1-s\phi^\prime(h(s)))h^\prime(s)$. Therefore,
 \beqnn
 \lim_{s\to 1-} \frac{(1-s\phi^\prime(h(s)))^2}{1-s}=\lim_{s\to 1-} 2\phi(h(s))\phi^{\prime\prime}(h(s))=2\sigma^2,
 \eeqnn
which implies the desired result.\qed
\end{proof}

\medskip

Let $\tau=\inf\{n>0, W_n=0\}$, and define $f_k(s)=\bfE(s^{\tau}|W_{_0}=k)$.

\medskip

\begin{lemma}\label{lemrc-1}
Suppose that for each pair $(i,j)$, the transition probability $p_{i,j}$ is given by (\ref{rc-1}). Then $\displaystyle{f_{_0}(s)=1+p_{_{-1}}s-\frac{p_{_{-1}}s}{h(s)}}$.
\end{lemma}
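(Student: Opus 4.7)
The plan is to compute $f_0(s)$ by conditioning on the first step out of state $0$ and then exploiting that, from any state $i\geq 1$, the chain is skip-free downward (it can decrease by at most $1$ per step), so the hitting time of $0$ decomposes into a sum of i.i.d.\ ``level-crossing'' times.

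First I would write the one-step decomposition at state $0$. From $0$ the chain either remains at $0$ (with probability $p_{_0}+p_{_{-1}}$) or jumps to some state $k\geq 1$ (with probability $p_k$), so
\[
f_{_0}(s)=s(p_{_0}+p_{_{-1}})+s\sum_{k=1}^\infty p_k\,f_k(s).
\]
Next I would argue that $f_k(s)=f_{_1}(s)^k$ for every $k\geq 1$. Since the transitions from every state $i\geq 1$ are spatially homogeneous and allow a downward step of at most one, to reach $0$ from $k$ the chain must successively hit $k-1,k-2,\dots,1,0$. By the strong Markov property and the translation invariance of the dynamics on $\{1,2,\dots\}$, the successive descent times are i.i.d.\ copies of the time to descend by one, which equals the hitting time of $0$ starting from $1$. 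Setting $g(s):=f_{_1}(s)$, this gives $f_k(s)=g(s)^k$.

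Now I would derive a fixed-point equation for $g(s)$. Conditioning on the first step from state $1$: with probability $p_{_{-1}}$ the chain reaches $0$ in one step, and with probability $p_k$ ($k\geq 0$) it moves to $1+k$, from which the remaining hitting time has generating function $g(s)^{1+k}$. Hence
\[
g(s)=s\Bigl(p_{_{-1}}+\sum_{k=0}^\infty p_k\,g(s)^{k+1}\Bigr)=s\,\phi(g(s)).
\]
By Lemma \ref{lemma1}, the equation $x=s\phi(x)$ has a unique solution in $[0,1]$, namely $h(s)$, so $g(s)=h(s)$ and therefore $f_k(s)=h(s)^k$ for $k\geq 1$.

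Finally I would substitute back and simplify using $\phi(h(s))=h(s)/s$. Writing
\[
p_{_0}+\sum_{k=1}^\infty p_k h(s)^k=\frac{\phi(h(s))-p_{_{-1}}}{h(s)}=\frac{1}{s}-\frac{p_{_{-1}}}{h(s)},
\]
the first display yields
\[
f_{_0}(s)=sp_{_{-1}}+s\Bigl(\tfrac{1}{s}-\tfrac{p_{_{-1}}}{h(s)}\Bigr)=1+p_{_{-1}}s-\frac{p_{_{-1}}s}{h(s)},
\]
which is the claimed formula. The main subtlety is justifying the product identity $f_k(s)=h(s)^k$: one must be careful that the excursions between successive level-crossings indeed use only transitions from states $\geq 1$ (so the anomalous dynamics at state $0$ does not enter), but this is exactly what the skip-free-down structure and the strong Markov property guarantee; the rest of the argument is algebraic manipulation via the defining relation of $h$.
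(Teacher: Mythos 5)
Your proposal is correct and follows essentially the same route as the paper: the first-entrance decomposition at state $0$, the factorization $f_k(s)=f_{_1}(s)^k$ from the skip-free-down and spatially homogeneous structure, the identification $f_{_1}(s)=h(s)$ via the fixed-point equation $f_{_1}(s)=s\phi(f_{_1}(s))$, and the final algebraic substitution. The paper's proof is just a terser version of the same argument, so nothing further is needed.
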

\begin{proof} By the one step analysis of Markov chains, we know that the family of functions $\{f_k(s); k\geq 0\}$ satisfies the following equations,
 \beqlb
 f_{_0}(s)&=&s\left(\sum_{k=1}^{+\infty} p_k f_k(s)+p_{_0}+p_{_{-1}}\right),\label{rc-2}
\\ f_{_1}(s)&=&s\left(\sum_{k=1}^{+\infty} p_{k-1} f_k(s)+p_{_{-1}}\right).\label{rc-3}
 \eeqlb
For $k\geq1$, since $W$ is left continuous, when $W_0=k$, we have $\tau=\sum\limits_{i=1}^k\tau_i$, where $\tau_i$ is the first time that $W$ hits $i-1$ starting from $i$~($i=1,\cdots,k$). So by the Markov and the homogeneous property of $W$, we have $$f_k(s)=\prod\limits_{i=1}^kE\left(\left.s^{\tau_i}\right|W_0=i\right)=\prod\limits_{i=1}^kE\left(\left.s^\tau\right|W_0=1\right)=(f_1(s))^k$$ for $k\geq1$, together with (\ref{rc-3}), it implies $f_{_1}(s)=h(s)$. Therefore, from (\ref{rc-2}), we obtain
 \beqnn
 f_{_0}(s)&=&s\left(\sum_{k=1}^{+\infty} p_k h^{k+1}(s)+(p_{_0}+p_{_{-1}})h(s)\right)/h(s)
 \\&=&s\left(\phi(h(s))-p_{_{-1}}+p_{_{-1}}h(s)\right)/h(s)=1+p_{_{-1}}s-\frac{p_{_{-1}}s}{h(s)},
 \eeqnn
as desired.\qed
\end{proof}

\medskip

\begin{lemma}\label{lemlc-1}
Suppose that for each pair $(i,j)$, the transition probability $p_{i,j}$ is given by (\ref{lc-1}). Then
 $\displaystyle{f_{_0}(s)=\frac{s(1-\phi(h(s)))}{(1-h(s))}}$.
\end{lemma}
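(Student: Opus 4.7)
The strategy parallels Lemma \ref{lemrc-1}, but the clean identity $f_k=f_1^k$ used there fails for (\ref{lc-1}), since from any state the chain can jump downward an arbitrary distance in a single step. I therefore plan to handle the full family $\{f_k(s)\}_{k\ge 0}$ simultaneously and extract $f_1(s)$ via a kernel argument. First, a one-step decomposition at the origin gives $f_0(s)=s[(1-p_{-1})+p_{-1}f_1(s)]$, and for each $k\ge 1$ the transitions in (\ref{lc-1}) yield
\[f_k(s)=s\Bigl[p_{-1}f_{k+1}(s)+p_0 f_k(s)+\sum_{j=1}^{k-1}p_j f_{k-j}(s)+\sum_{j\ge k}p_j\Bigr].\]

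Next I would form the generating function $F(z,s)=\sum_{k\ge 0}f_k(s)z^k$, which is analytic in $|z|<1$ because $0\le f_k(s)\le 1$, and sum the $k\ge 1$ equations after multiplying by $z^k$. The shifted term $\sum_{k\ge 1}f_{k+1}z^k$ contributes $z^{-1}(F-f_0-f_1 z)$; the convolution collapses via $\sum_{j\ge 0}p_j z^j=(\phi(z)-p_{-1})/z$ into a multiple of $F-f_0$; and a double-sum swap turns the truncation tail into $\frac{sz}{1-z}\bigl[(1-p_{-1})-\sum_{k\ge 0}p_k z^k\bigr]$. After collecting and multiplying through by $z$ to clear the denominator, the identity takes the kernel form
\[\bigl(z-s\phi(z)\bigr)\bigl(F(z,s)-f_0(s)\bigr)=-sz\,p_{-1}f_1(s)+\frac{sz^2\bigl[(1-p_{-1})-\sum_{k\ge 0}p_k z^k\bigr]}{1-z}.\]

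By Lemma \ref{lemma1}, $h(s)\in(0,1)$ is the unique root of $z=s\phi(z)$ in $(0,1)$, so the left-hand side vanishes at $z=h(s)$. Analyticity of $F(\cdot,s)$ at $h(s)$ forces the right-hand side to vanish there as well, which uniquely determines $p_{-1}f_1(s)$ in terms of $h(s)$ and $\sum_{k\ge 0}p_k h(s)^k$. Substituting the resulting expression back into $f_0(s)=s[(1-p_{-1})+p_{-1}f_1(s)]$ and using $\phi(h)=p_{-1}+h\sum_{k\ge 0}p_k h^k$, the numerator collapses to $s(1-\phi(h(s)))$, yielding $f_0(s)=s(1-\phi(h(s)))/(1-h(s))$, as desired. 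The main obstacle is the algebraic bookkeeping in the middle step: correctly collating the convolution and the truncation-tail contributions so that the kernel $z-s\phi(z)$ emerges with the correct sign, enabling the vanishing condition at $z=h(s)$ to pin down $f_1(s)$.
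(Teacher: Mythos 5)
Your proposal is correct, and it follows the paper's route up to the key functional equation: the same first--entrance recursion (your $k\ge 1$ equations plus $f_0(s)=s[(1-p_{-1})+p_{-1}f_1(s)]$ are exactly the paper's single recursion $f_k(s)=s\sum_{n\ge k}p_n+s\sum_{j=-1}^{k-1}p_jf_{k-j}(s)$), and the same generating function $F(z,s)=\sum_k f_k(s)z^k$; your kernel identity is an algebraic rearrangement of the paper's closed form $F(u,s)=\bigl[su(1-\phi(u))-s(1-u)\phi(u)f_0(s)\bigr]/\bigl[(1-u)(u-s\phi(u))\bigr]$. Where you genuinely diverge is in how the unknown boundary term is extracted. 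You use the standard kernel method: since $0\le f_k(s)\le 1$, $F(\cdot,s)$ is finite on $[0,1)$, and $h(s)\in(0,1)$ kills the factor $z-s\phi(z)$, so the right-hand side must vanish at $z=h(s)$; this pins down $p_{-1}f_1(s)$ and the back-substitution collapses to $f_0(s)=s(1-\phi(h(s)))/(1-h(s))$ exactly as you computed. The paper instead avoids invoking the vanishing condition directly: it uses only the nonnegativity of $F(u,s)$ on either side of $u=h(s)$ to sandwich $f_0(s)$ between $u(1-\phi(u))/[(1-u)\phi(u)]$ evaluated from the left and from the right of $h(s)$, and then proves this auxiliary function is monotone (using convexity of $\phi$ and $\phi'(1)=1$) to force equality at $u=h(s)$. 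Your argument is shorter and more standard, and dispenses with the monotonicity computation; the paper's argument uses only positivity of the coefficients and so sidesteps any appeal to analyticity at the kernel root. Both are valid, and your algebra (including the tail term $\frac{sz}{1-z}\bigl[(1-p_{-1})-\sum_{k\ge 0}p_kz^k\bigr]$ and the final simplification via $\phi(h)=p_{-1}+h\sum_{k\ge 0}p_kh^k$) checks out.
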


\begin{proof} By the one step analysis of Markov chains, we know that $\{f_k(s); k\geq 0\}$ satisfies
 \beqlb
 f_k(s)&=&s\sum_{n=k}^{+\infty} p_n+s\sum_{j=-1}^{k-1}p_jf_{k-j}(s),\qquad k\geq 0. \label{lc-2}
 \eeqlb
For every $(u,s)\in[0, 1]\times[0, 1]$, define $F(u,s)=\sum\limits_{k=0}^{+\infty} u^kf_k(s)$. From (\ref{lc-2}), we obtain
 \begin{align*}
 &F(u,s)=\sum_{k=0}^{+\infty} u^k\left(s\sum_{n=k}^{+\infty} p_n+s\sum_{j=-1}^{k-1}p_jf_{k-j}(s)\right)
 \\=&s\sum_{n=0}^{+\infty} p_n\sum_{k=0}^n u^k+s\sum_{j=-1}^{+\infty} p_j\sum_{k=j+1}^{+\infty}u^kf_{k-j}(s)=s\frac{1-\phi(u)}{1-u}+\frac{s}{u}\phi(u)(F(u,s)-f_{_0}(s)),
 \end{align*}
which leads to $\displaystyle{F(u,s)=\frac{su(1-\phi(u))-s(1-u)\phi(u)f_{_0}(s)}{(1-u)(u-s\phi(u))}}$.
For each $s\in(0, 1)$, let $$D_+(s)=\{u\in[0, 1];\; u-s\phi(u)>0 \}\;\;\text{and}\;\; D_-(s)=\{u\in[0, 1];\; u-s\phi(u)<0\}.$$ Since $t-s\phi(t)$ is non--decreasing for $t\in[0, 1]$, we know that $u<h(s)<v$ for each $u\in D_-(s)$ and $v\in D_+(s)$. In addition, due to the fact that $F(u, s)\geq 0$ for all $(u, s)\in[0, 1]\times[0, 1]$, we have that $sv(1-\phi(v))-s(1-v)\phi(v)f_{_0}(s)\geq 0$ for all $v\in D_+(s)$, which means that $\displaystyle{f_{_0}(s)\leq\frac{v(1-\phi(v))}{(1-v)\phi(v)}}$ for all $v\in D_+(s)$. Similarly, $\displaystyle{f_{_0}(s)\geq\frac{u(1-\phi(u))}{(1-u)\phi(u)}}$ for all $u\in D_-(s)$. Observe that $\displaystyle{\left(\frac{s(1-\phi(s))}{(1-s)\phi(s)}\right)^\prime=\frac{\phi(s)-\phi^2(s)-s(1-s)\phi^\prime(s)}{(1-s)^2\phi^2(s)}}$ for each $s\in(0, 1)$. From the facts that $\phi^\prime(1)=1$, $p_{_0}<1$, as well as the convexity of $\phi$, we know that  $p_{_{-1}}=\phi(0)>0$, $\phi(s)>s$ and
   $$\phi(s)-\phi^2(s)-s(1-s)\phi^\prime(s)>s(1-\phi(s)-(1-s)\phi^\prime(s))>0$$ for all $s\in(0, 1)$. Therefore, the function $\dfrac{s(1-\phi(s))}{(1-s)\phi(s)}$ is non--decreasing for $s\in[0, 1]$, which implies that
 $$\lim_{u\to h(s)-}\frac{u(1-\phi(u))}{(1-u)\phi(u)}\leq f_{_0}(s)\leq \lim_{v\to h(s)+} \frac{v(1-\phi(v))}{(1-v)\phi(v)}.$$
That is, $\displaystyle{f_{_0}(s)=\frac{h(s)(1-\phi(h(s)))}{(1-h(s))\phi(h(s))}}$, which leads to the desired result.\qed
\end{proof}

\medskip

For each $\lambda<0$, let $\Lambda(\lambda)=\ln f_{_0}(\e^\lambda)$. Then we have the following result.

\begin{lemma}\label{lemlambda}
    If (\ref{rc-1}) or (\ref{lc-1}) holds, then $\Lambda^\prime(\lambda)$ is increasing, and
      $$\lim_{\lambda\to-\infty}\Lambda^\prime(\lambda)=1,\qquad\lim_{\lambda\to 0-}\Lambda^\prime(\lambda)=+\infty.$$
 \end{lemma}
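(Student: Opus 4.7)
The plan is to view $\Lambda(\lambda)=\ln f_{_0}(\e^\lambda)=\ln\bfE[\e^{\lambda\tau}]$ as the cumulant generating function of the first return time $\tau$ (finite on $(-\infty,0]$ since $\tau\geq 1$), to read monotonicity off from convexity, and to extract the two limits from the explicit expressions for $f_{_0}$ in Lemmas~\ref{lemrc-1}--\ref{lemlc-1} together with the asymptotics of $h$ from Lemma~\ref{lemma1}. Setting $s=\e^\lambda\in(0,1)$ and differentiating,
$$\Lambda^\prime(\lambda)=\frac{s f_{_0}^\prime(s)}{f_{_0}(s)}=\frac{\bfE[\tau s^\tau]}{\bfE[s^\tau]},$$
which is the expectation of $\tau$ under the exponentially tilted law. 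Convexity of the cumulant generating function then yields that $\Lambda^\prime$ is (strictly) increasing, since $\tau$ is not a.s.\ constant in either of the two models.

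For $\lambda\to-\infty$ (equivalently $s\to 0+$) I would expand $f_{_0}$ at $0$. In the right continuous case, parts (1) and (3) of Lemma~\ref{lemma1} give $h(s)=p_{_{-1}}s+p_{_{-1}}p_{_0}s^2+o(s^2)$, so substitution into Lemma~\ref{lemrc-1} produces $f_{_0}(s)=(p_{_{-1}}+p_{_0})s+O(s^2)$. In the left continuous case, $h(s)\to 0$ and $\phi(h(s))\to p_{_{-1}}$ give directly $f_{_0}(s)=(1-p_{_{-1}})s+O(s^2)$ from Lemma~\ref{lemlc-1}. In both cases $f_{_0}$ vanishes linearly at $0$ with a strictly positive coefficient, so $s f_{_0}^\prime(s)/f_{_0}(s)\to 1$. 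Probabilistically, the tilted law concentrates on the minimal value $\tau=1$ as $\lambda\to-\infty$, consistent with this limit.

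The main obstacle is the limit as $\lambda\to 0-$; here the criticality $\phi^\prime(1)=1$ is essential. The denominator of $h^\prime(s)=\phi(h(s))/(1-s\phi^\prime(h(s)))$ from Lemma~\ref{lemma1}(2) vanishes while its numerator tends to $1$ as $s\to 1-$, forcing $h^\prime(s)\to+\infty$. In the right continuous case, direct differentiation of Lemma~\ref{lemrc-1} yields
$$f_{_0}^\prime(s)=p_{_{-1}}-\frac{p_{_{-1}}}{h(s)}+\frac{p_{_{-1}}sh^\prime(s)}{h(s)^2},$$
whose third term diverges while $f_{_0}(s)\to 1$, so $s f_{_0}^\prime(s)/f_{_0}(s)\to+\infty$. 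In the left continuous case, I would write $f_{_0}(s)=s\Phi(h(s))$ with $\Phi(u)=(1-\phi(u))/(1-u)$; a short Taylor expansion of $\phi$ about $1$ (using $\phi^\prime(1)=1$) gives $\Phi(u)\to 1$ and $\Phi^\prime(u)\to\phi^{\prime\prime}(1)/2$ as $u\to 1-$, and the hypotheses $p_{_{-1}}>0$ together with $\phi^\prime(1)=1$ force $\phi^{\prime\prime}(1)>0$ (possibly $+\infty$), so $\Phi^\prime(h(s))$ is bounded away from $0$ for $s$ near $1$. Consequently $(\ln f_{_0})^\prime(s)=1/s+h^\prime(s)\Phi^\prime(h(s))/\Phi(h(s))\to+\infty$, completing the proof that $\Lambda^\prime(0-)=+\infty$.
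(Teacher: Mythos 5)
Your proof is correct and follows essentially the same route as the paper: monotonicity from convexity of the cumulant generating function (the paper uses the H\"older/Cauchy--Schwarz form of the same fact), and the two limits from the explicit formulas of Lemmas~\ref{lemrc-1}--\ref{lemlc-1} combined with the behavior of $h$ and $h^\prime$ from Lemma~\ref{lemma1}. Your treatment of the $\lambda\to-\infty$ limit via the linear vanishing of the probability generating function $f_{_0}$ at $0$ (equivalently, concentration of the tilted law on $\{\tau=1\}$) is a slightly cleaner packaging of the paper's direct computation, but the ingredients are identical.
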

\begin{proof} From the fact that $\Lambda(\lambda)=\ln(f_{_0}(\e^\lambda))=\ln(\bfE(\e^{\lambda \tau}|W_{_0}=0))$, we have
 $$\Lambda^{\prime\prime}(\lambda)=\frac{\bfE(\tau^2\e^{\lambda \tau}|W_{_0}=0)\bfE(\e^{\lambda \tau}|W_{_0}=0)-[\bfE(\tau\e^{\lambda \tau}|W_{_0}=0)]^2}{[\bfE(\tau\e^{\lambda \tau}|W_{_0}=0)]^2}.$$
Therefore, the H\"{o}lder's inequality implies that $\Lambda^{\prime\prime}(\lambda)>0$ for all $\lambda<0$ and hence $\Lambda^\prime(\lambda)$ is monotone increasing for $\lambda\in(-\infty, 0)$.

\medskip

To show the limits, we first consider the case when (\ref{rc-1}) holds. In this case, from Lemma \ref{lemrc-1}, we have
\beqnn
\Lambda(\lambda)=\ln\left(1+p_{_{-1}}\e^{\lambda}-\frac{p_{_{-1}}\e^\lambda}{h(\e^\lambda)}\right).
\eeqnn
By simple computations, we obtain $\displaystyle{\Lambda^\prime(\lambda)=\frac{p_{_{-1}}\e^\lambda\left(h(\e^\lambda)\left(1-\e^\lambda\phi^\prime(h(\e^\lambda))\right)+\e^\lambda\phi^\prime\left(h(\e^\lambda)\right)\right)}{\left(1-\e^\lambda\phi^\prime(h(\e^\lambda))\right)\left[(1+p_{_{-1}}\e^\lambda)h(\e^\lambda)-p_{_{-1}}\e^\lambda\right]}}$. Using Lemma \ref{lemma1} and noting that $\phi(0)=p_{_{-1}}$, $\phi^\prime(0)=p_{_0}$ and $\phi(1)=\phi^\prime(1)=1$, we get
\beqnn
\lim_{\lambda\to 0-}\Lambda^\prime(\lambda)&=&\lim_{\lambda\to 0-}\frac{p_{_{-1}}}{1-\e^\lambda\phi^\prime(h(\e^\lambda))}=+\infty,
\\ \lim_{\lambda\to -\infty}\Lambda^\prime(\lambda)&=&\lim_{\lambda\to -\infty}\frac{p_{_{-1}}\e^\lambda\left(h(\e^\lambda)+\e^\lambda\phi^\prime\left(h(\e^\lambda)\right)\right)}{(1+p_{_{-1}}\e^\lambda)h(\e^\lambda)-p_{_{-1}}\e^\lambda}=\lim_{\lambda\to -\infty}\frac{p_{_{-1}}(p_{_{-1}}+p_{_0})\e^{2\lambda}}{p_{_{-1}}(p_{_{-1}}+p_{_0})\e^{2\lambda}}=1.
\eeqnn

\medskip

Next, we consider the case when (\ref{lc-1}) holds.  From Lemma \ref{lemlc-1}, it follows that $\displaystyle{\Lambda(\lambda)=\ln\left(\e^\lambda\frac{1-\phi(h(\e^\lambda))}{1-h(\e^\lambda)}\right)}$. It is easy to see that $\displaystyle{\Lambda^\prime(\lambda)=1+\frac{h^\prime(\e^\lambda)\e^\lambda}{1-h(\e^\lambda)}-\frac{\phi^\prime(h(\e^\lambda))h^\prime(e^\lambda)\e^\lambda}{1-\phi(h(\e^\lambda))}}$. Using Lemma \ref{lemma1} and noting that $\phi(0)=p_{_{-1}}$, $\phi^\prime(0)=p_{_0}$ and $\phi(1)=\phi^\prime(1)=1$, we obtain
$$
\lim_{\lambda\to -\infty}\Lambda^\prime(\lambda)=1+\lim_{s\to 0+}\left[\frac{h^\prime(s)s}{1-h(s)}-\frac{\phi^\prime(h(s))h^\prime(s)s}{1-\phi(h(s))}\right]=1,$$
and that
\beqnn
\lim_{\lambda\to 0-}\Lambda^\prime(\lambda)&=&\lim_{s\to 1-}\left[\frac{1-\phi(h(s))-\phi^\prime(h(s))(1-h(s))}{(1-h(s))(1-\phi(h(s)))}h^\prime(s)s+1\right].
\eeqnn
From Lemma \ref{lemma1}, we know that as $s\to 1$, $h(s)\to 1$ and $h^\prime(s)\to +\infty$. It is easy to get
 $$\lim_{s\to 1-}\frac{1-\phi(h(s))-\phi^\prime(h(s))(1-h(s))}{(1-h(s))(1-\phi(h(s)))}h^\prime(s)=\infty.$$
Consequently, $\lim\limits_{\lambda\to 0-}\Lambda^\prime(\lambda)=\infty.$\qed
\end{proof}

\begin{lemma}\label{lemlimit}
    If (\ref{rc-1}) holds, then $$\lambda-\Lambda(\lambda)\to -\ln(p_{_{-1}}+p_{_0})~~\text{as}~~\lambda\to-\infty.$$
    If (\ref{lc-1}) holds, then $$\lambda-\Lambda(\lambda)\to -\ln(1-p_{_{-1}})~~\text{as}~~\lambda\to-\infty.$$
\end{lemma}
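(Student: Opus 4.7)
\textbf{Proof plan for Lemma \ref{lemlimit}.}

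The plan is to write $\lambda-\Lambda(\lambda)=\ln(s/f_{_0}(s))$ with $s=\e^\lambda$, and then compute the limit of $f_{_0}(s)/s$ as $s\to 0+$ by plugging in the closed forms obtained in Lemmas \ref{lemrc-1} and \ref{lemlc-1}. In both cases, the heavy lifting has already been done; what remains is a careful asymptotic expansion of $h(s)$ near $s=0$, for which Lemma \ref{lemma1}(1) and (3) will be the key inputs.

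For the case when (\ref{rc-1}) holds, I start from $f_{_0}(s)=1+p_{_{-1}}s-p_{_{-1}}s/h(s)$. Since $h(s)\to 0$ and $h(s)/s\to p_{_{-1}}$ as $s\to 0+$, the two subtracted contributions cancel at leading order, so the job is to compute the next-order behavior. Using Lemma \ref{lemma1}(3), I would write $h(s)=p_{_{-1}}s+p_{_{-1}}p_{_0}s^2+o(s^2)$, hence
\[
\frac{p_{_{-1}}s}{h(s)}=\frac{1}{1+p_{_0}s+o(s)}=1-p_{_0}s+o(s),
\]
which gives $f_{_0}(s)=(p_{_{-1}}+p_{_0})s+o(s)$, and therefore $f_{_0}(s)/s\to p_{_{-1}}+p_{_0}$. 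Taking logarithms yields $\lambda-\Lambda(\lambda)=-\ln(f_{_0}(s)/s)\to-\ln(p_{_{-1}}+p_{_0})$.

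For the case when (\ref{lc-1}) holds, I start from $f_{_0}(s)=s(1-\phi(h(s)))/(1-h(s))$, so that
\[
\lambda-\Lambda(\lambda)=-\ln\!\left(\frac{1-\phi(h(s))}{1-h(s)}\right).
\]
Here the computation is actually immediate: by Lemma \ref{lemma1}(1), $h(s)\to 0$ as $s\to 0+$, so by continuity $\phi(h(s))\to\phi(0)=p_{_{-1}}$ and $1-h(s)\to 1$. This gives $(1-\phi(h(s)))/(1-h(s))\to 1-p_{_{-1}}$, and hence $\lambda-\Lambda(\lambda)\to-\ln(1-p_{_{-1}})$, as claimed.

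The only real obstacle is the right-continuous case, where the numerator and denominator of $p_{_{-1}}s/h(s)$ both vanish and a naive evaluation only yields $f_{_0}(s)\to 0$ without identifying the rate. Once Lemma \ref{lemma1}(3) is used to refine $h(s)$ to second order in $s$, the cancellation becomes transparent and the limit pops out. Both cases can therefore be presented in a single short argument, with the two asymptotic expansions carried out in parallel.
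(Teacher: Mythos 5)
Your proposal is correct and follows essentially the same route as the paper: both cases reduce to computing $\lim_{s\to 0+}f_{_0}(s)/s$ from the closed forms in Lemmas \ref{lemrc-1} and \ref{lemlc-1}, with the second-order expansion $h(s)=p_{_{-1}}s+p_{_{-1}}p_{_0}s^2+o(s^2)$ from Lemma \ref{lemma1}(3) resolving the cancellation in the right-continuous case exactly as the paper does. The only cosmetic difference is that you expand $p_{_{-1}}s/h(s)$ as a series while the paper clears denominators into a single fraction before taking the limit.
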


\begin{proof} When (\ref{rc-1}) holds,
 $\displaystyle{\lambda-\Lambda(\lambda)=\ln\left(\frac{\e^\lambda h(\e^\lambda)}{(1+p_{_{-1}}\e^\lambda)h(\e^\lambda)-p_{_{-1}}\e^\lambda}\right)}$.
Then
 $$
 \lim_{\lambda\to-\infty}\lambda-\Lambda(\lambda)=\lim_{\lambda\to-\infty}\ln\left(\frac{\e^\lambda h(\e^\lambda)}{(1+p_{_{-1}}\e^\lambda)h(\e^\lambda)-p_{_{-1}}\e^\lambda}\right)=\ln\left(\lim_{s\to 0+}\frac{s h(s)}{(1+p_{_{-1}}s)h(s)-p_{_{-1}}s}\right).
 $$
Using (3) in Lemma \ref{lemma1}, we obtain
  $$
 \lim_{\lambda\to-\infty}\lambda-\Lambda(\lambda)=\ln\left(\lim_{s\to 0+}\frac{p_{_{-1}}s^2}{p_{_{-1}}s+p_{_{-1}}p_{_0}s^2+p_{_{-1}}^2s^2-p_{_{-1}}s}\right)=\ln\left(\frac{1}{p_{_{-1}}+p_{_0}}\right)=-\ln(p_{_{-1}}+p_{_0}).
 $$

When (\ref{lc-1}) holds, we have
 \beqnn
 \lim_{\lambda\to-\infty}\lambda-\Lambda(\lambda)&=&\lim_{\lambda\to-\infty}\ln\left(\frac{1-h(\e^\lambda)}{1-\phi(h(\e^\lambda))}\right)=\ln\left(\lim_{s\to 0+}\frac{1-s}{1-\phi(s)}\right)=-\ln(1-p_{_{-1}}).
 \eeqnn
The proof is completed.\qed
\end{proof}

\begin{lemma}\label{mdplem-1}
Let $P(s)=1/(1-f_{_0}(s))$ for $s\in(0, 1)$. Suppose that there exists $\alpha\in(0,1)$ and $c>0$  such that $$\lim_{s\to 1-} \frac{1-s\phi^\prime(h(s))}{(1-s)^\alpha}=c.$$ Then when (\ref{rc-1}) holds,
 $$\lim_{s\to 1-}P(s)(1-s)^{1-\alpha}=\frac{(1-\alpha)c}{p_{_{-1}}},$$
and when (\ref{lc-1}) holds,
 $$\lim_{s\to 1-}P(s)(1-s)^{\alpha}=\frac{1}{(1-\alpha)c}.$$
\end{lemma}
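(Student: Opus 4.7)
The plan is to combine the explicit formulas for $f_{_0}(s)$ from Lemmas \ref{lemrc-1} and \ref{lemlc-1} with the fixed-point identity $h(s)=s\phi(h(s))$ so that in both cases the statement reduces to determining the asymptotic order of $1-h(s)$ as $s\to 1-$; then I would read this order off from Lemma \ref{lemma1}(2) via L'Hospital's rule.

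For the right-continuous case~(\ref{rc-1}), I would first apply Lemma \ref{lemrc-1} to rewrite
\[
1-f_{_0}(s)=\frac{p_{_{-1}}s(1-h(s))}{h(s)},\qquad P(s)=\frac{h(s)}{p_{_{-1}}s(1-h(s))},
\]
so $P(s)(1-s)^{1-\alpha}=\frac{h(s)}{p_{_{-1}}s}\cdot\frac{(1-s)^{1-\alpha}}{1-h(s)}$, whose first factor tends to $1/p_{_{-1}}$. For the left-continuous case~(\ref{lc-1}), the fixed-point relation $h(s)=s\phi(h(s))$ gives $s(1-\phi(h(s)))=s-h(s)$, which simplifies Lemma \ref{lemlc-1} to $f_{_0}(s)=(s-h(s))/(1-h(s))$. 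This yields $1-f_{_0}(s)=(1-s)/(1-h(s))$, and hence $P(s)(1-s)^{\alpha}=\frac{1-h(s)}{(1-s)^{1-\alpha}}$.

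Both assertions therefore reduce to proving
\[
\lim_{s\to 1-}\frac{1-h(s)}{(1-s)^{1-\alpha}}=\frac{1}{(1-\alpha)c}.
\]
Since the numerator and denominator both vanish at $s=1$, I would apply L'Hospital's rule and use the formula $h^\prime(s)=\phi(h(s))/(1-s\phi^\prime(h(s)))$ from Lemma \ref{lemma1}(2) to rewrite the limit as
\[
\lim_{s\to 1-}\frac{h^\prime(s)(1-s)^\alpha}{1-\alpha}=\lim_{s\to 1-}\frac{\phi(h(s))(1-s)^\alpha}{(1-\alpha)(1-s\phi^\prime(h(s)))}.
\]
Now $\phi(h(s))\to\phi(1)=1$, and the hypothesis of the lemma gives $(1-s)^\alpha/(1-s\phi^\prime(h(s)))\to 1/c$, so the limit equals $1/((1-\alpha)c)$. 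Substituting this back, equivalently using $(1-s)^{1-\alpha}/(1-h(s))\to (1-\alpha)c$, produces the claimed values $(1-\alpha)c/p_{_{-1}}$ for case~(\ref{rc-1}) and $1/((1-\alpha)c)$ for case~(\ref{lc-1}).

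The only point that needs justification is the application of L'Hospital's rule, but its hypotheses are immediate from Lemma \ref{lemma1}: $h$ is differentiable on $(0,1)$, $h(s)\to 1$ as $s\to 1-$, and $(1-s)^{1-\alpha}$ is smooth with non-vanishing derivative on $(0,1)$. I expect no other obstacle; once the two formulas for $P(s)$ have been reduced to expressions in $1-h(s)$, the proof is essentially a bookkeeping computation driven by the single asymptotic $1-h(s)\sim (1-s)^{1-\alpha}/((1-\alpha)c)$.
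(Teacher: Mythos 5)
Your proof is correct and follows essentially the same route as the paper: both use the explicit formulas for $f_{_0}(s)$ from Lemmas \ref{lemrc-1} and \ref{lemlc-1}, simplify via $h(s)=s\phi(h(s))$, and determine the order of $1-h(s)$ by L'Hospital's rule together with $h^\prime(s)=\phi(h(s))/(1-s\phi^\prime(h(s)))$ and Assumption (H). Your write-up is if anything slightly cleaner in isolating the single asymptotic $1-h(s)\sim (1-s)^{1-\alpha}/((1-\alpha)c)$ that drives both cases.
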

\begin{proof} If (\ref{rc-1}) holds, then by Lemma \ref{lemma1} (2), Lemma \ref{lemrc-1} and Assumption (H),
 \begin{align*}
 &\lim_{s\to 1-}P(s)(1-s)^{1-\alpha}=\lim_{s\to 1-}\frac{h(s)(1-s)^{1-\alpha}}{p_{_{-1}}s(1-h(s))}=\lim_{s\to 1-}\frac{(1-s)^{1-\alpha}}{p_{_{-1}}(1-h(s))}\\
 =&\lim_{s\to 1-}\frac{1-\alpha}{p_{_{-1}}(1-s)^\alpha h^\prime(s)}=\frac{1-\alpha}{p_{_{-1}}}\lim_{s\to 1-}\frac{1-s\phi^\prime(h(s))}{(1-s)^\alpha\phi(h(s))}=\frac{c(1-\alpha)}{p_{_{-1}}}.
 \end{align*}

If (\ref{lc-1}) holds, then  Lemma \ref{lemlc-1} implies that
 \beqnn
P(s)=\frac{1}{1-f_{_0}(s)}=\frac{1-h(s)}{1-h(s)-s+s\phi(h(s))}=\frac{1-h(s)}{1-s}.
\eeqnn
Therefore, from Assumption (H), we obtain
 \beqnn
\lim_{s\to 1-}P(s)(1-s)^{\alpha}=\lim_{s\to 1-}\frac{1-h(s)}{(1-s)^{1-\alpha}}=\frac{1}{(1-\alpha)c}.
\eeqnn
The proof is completed.\qed
 \end{proof}

\section{The proof of LDP}
In this section, we will provide the proof of LDP. Let $\bar{S}_{_0}=0$ and $\bar{S}_n=M_n-S_n$ for $n\geq 1$, where $\{S_n\}$ is the random walk given in Section 2, and $M_n=\max\limits_{0\leq k\leq n} S_k$. For any $n\geq 0$,
\beqnn
 \bar{S}_{n+1}&=&M_{n+1}-S_{n+1}=S_{n+1}\vee M_n-S_{n+1}
\\&=&(S_{n+1}-S_n)\vee (M_n-S_n)+S_n-S_{n+1}=(S_{n+1}-S_n)\vee \bar{S}_n-(S_{n+1}-S_n),
\eeqnn
Since $S_{n+1}-S_n$ is independent of $\{S_k, 0\leq k\leq n\}$ and has the same distribution,
$\{\bar{S}_n, n\geq 0\}$ is a nonnegative time-homogeneous Markov chain with one-step transition probabilities
 \beqnn
p_{i,j}=\bfP(\bar{S}_{n+1}=j|\bar{S}_n=i)=\begin{cases} \bfP(S_{n+1}-S_n=i-j), & j>0,
\\ \bfP(S_{n+1}-S_{n}\geq i), \quad & j=0.\end{cases}
 \eeqnn
The basic assumption that $S$ is right or left continuous implies that
 \beqnn
p_{i,j}=\begin{cases} \bfP(S_{n+1}-S_n=i-j), & j\geq i-1, i\geq 1,
\\ \bfP(S_{n+1}-S_{n}=0)+\bfP(S_{n+1}-S_{n}=1), \quad & j=0, i=0,
\\ \bfP(S_{n+1}-S_n=-j),\quad & j>0, i=0,
\\ 0, \quad&\text{otherwise}.\end{cases}
 \eeqnn
When $S$ is right continuous, the transition probability $p_{i,j}$ is given by (\ref{rc-1}) with $p_{_{-1}}=q$. Similarly, when $S$ is left continuous, the transition probability $p_{i,j}$ is given by (\ref{lc-1}) with $p_{_{-1}}=q$.

Let $ L_n^0(\bar{S})$ be the occupation time of $\bar{S}$ at the site $0$ from time $1$ up to time $n$, that is,
 $$L_{_0}^{0}(\bar{S})=0,~~\text{and}~~L_n^0(\bar{S})=\sum_{k=1}^n{\bf 1}_{\{\bar{S}_k=0\}}~~\text{for}~~n\geq 1.$$
It is easy to see that for every $n\geq 0$,
 \beqlb\label{form1}
 A_n=L_n^0(\bar{S}).
 \eeqlb
Let $\bar{\tau}_{_1}:=\inf\{n>0, \bar{S}_n=0\}$ and $\bar{\tau}_{k+1}:=\inf\{n>\tau_{k}, \bar{S}_n=0\}$ for $k\geq 1$. (\ref{form1}) suggests that  $A_n=L_n^0(\bar{S})=\sup\{k\geq 1, \bar{\tau}_k\leq n\}$. The Markov property indicates that $\bar{\tau}_{_1}$ and
 $\bar{\tau}_{k+1}-\bar{\tau}_k,\; k\geq 1$
are i.i.d.

\medskip

We next prove the LDP for $A_n$.

\noindent{\bf Proof of Theorem \ref{ldp}}. Let $\{Y_i,~i\geq 1\}$ be a sequence of i.i.d. random variables with the same distribution as $\bar{\tau}_{_1}$. Then we have $$\bfP\left(\sum\limits _{i=1}^{\lceil x n\rceil} Y_i\leq n\right)\leq \bfP(A_n\geq x n)\leq \bfP\left(\sum\limits_{i=1}^{\lfloor x n\rfloor} Y_i\leq n\right)$$
for any $0<x\leq 1$, where $\lceil a\rceil$ and $\lfloor a\rfloor$ denote the minimal integer no smaller than $a$ and the maximal integer no larger than $a$, respectively.

When $S$ is right continuous, since $\bar{S}_{_0}=0$ and $Y\stackrel{d}{=}\bar{\tau}_{_1}$, we can get from Lemma \ref{lemrc-1} that $\bfE(\e^{\lambda Y })=\Lambda_r(\lambda)$  for any $\lambda<0$, and that $\bfE(Y)=+\infty$. Applying Cram\'{e}r's Theorem \cite[Theorem 2.2.3]{DZ98}, Lemma \ref{lemlambda} and Lemma \ref{lemlimit}, we obtain
 \beqnn
  \lim_{n\to +\infty}\frac{1}{n}\ln \bfP\left(\sum _{i=1}^{n} Y_i\leq x n\right)=-\Lambda_r^*(x).
 \eeqnn
Similarly, when $S$ is left continuous,
  \beqnn
  \lim_{n\to +\infty}\frac{1}{n}\ln \bfP\left(\sum _{i=1}^{n} Y_i\leq x n\right)=-\Lambda_l^*(x).
 \eeqnn
The rest is the same as the proof of Theorem 2 in \cite{GZ98}. So we omit the details.\qed

\section{The proof of MDP}

In this section, we will first prove the MDP for $A_n$ under the assumption (H). Then we provide some sufficient conditions for the assumption (H). Based on these sufficient conditions, we can directly get Corollaries \ref{thmmdp-2} and \ref{thmmdp-3}.

\medskip

The following lemma is a special presentation of Chen \cite[Theorem 2]{C01} in our case.

\begin{lemma}\label{c01} Suppose that there is a non-decreasing positive function $a(t)$ on $[1, +\infty)$ such that $a(t)\uparrow\infty$ and
 $\displaystyle{\lim_{n\to +\infty}\frac{1}{a(n)}\sum_{k=1}^n \bfP(\bar{S}_n=0|\bar{S}_{_0}=0)=1}$,
and there exists $p\in[0, 1)$ such that $\lim\limits_{\lambda\to +\infty}a(\lambda t)/a(\lambda)=t^p$ for every $t>0$. Furthermore, let $\{b_n\}$ be a positive sequence satisfying $b_n\to +\infty$ and $b_n/n\to 0$ as $n\to +\infty$. Then
 $$\lim_{n\to +\infty}\frac{1}{b_n}\ln\bfP\left(\sum_{k=1}^n{\bf 1}_{\{\bar{S}_k=0\}}>\lambda a\left(\frac{n}{b_n}\right)b_n\right)=-(1-p)\left(\frac{p^p\lambda}{\Gamma(p+1)}\right)^{(1-p)^{-1}}.$$
\end{lemma}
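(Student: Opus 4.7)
The plan is to identify a regularly-varying normalizer $a(n)$ from the asymptotics of the return-time generating function established in Lemma~\ref{mdplem-1}, and then invoke Chen's moderate deviation theorem for occupation times of a recurrent Markov chain at a single site.

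By the strong Markov property, consecutive visits of $\bar S$ to $0$ are separated by i.i.d.\ copies of $\bar\tau_{_1}$, whose common generating function is $f_{_0}(s)$. Writing $u_k:=\bfP(\bar S_k=0\mid\bar S_{_0}=0)$, the standard renewal identity gives
\[
\sum_{k=0}^{+\infty} u_k\, s^k \;=\; \frac{1}{1-f_{_0}(s)} \;=\; P(s), \qquad s\in(0,1).
\]
Lemma~\ref{mdplem-1} supplies $P(s)\sim \frac{(1-\alpha)c}{p_{_{-1}}}(1-s)^{-(1-\alpha)}$ in the right-continuous case and $P(s)\sim \frac{1}{(1-\alpha)c}(1-s)^{-\alpha}$ in the left-continuous case as $s\to 1^-$. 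Applying Karamata's Tauberian theorem for power series with non-negative coefficients transfers this to the partial sums:
\[
\sum_{k=1}^n u_k \;\sim\; a(n),\qquad a(n)=\begin{cases}\dfrac{(1-\alpha)c}{p_{_{-1}}\Gamma(2-\alpha)}\, n^{1-\alpha}, & \text{case }(\ref{rc-1}),\\[2mm] \dfrac{1}{(1-\alpha)c\,\Gamma(1+\alpha)}\, n^{\alpha}, & \text{case }(\ref{lc-1}).\end{cases}
\]
In either case $a$ is positive, non-decreasing, tends to infinity, and is regularly varying with index $p=1-\alpha$ or $p=\alpha$, both in $(0,1)$. This establishes the first two assertions of the lemma.

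The MDP conclusion is then a direct application of \cite[Thm.~2]{C01}. The chain $\bar S$ is irreducible on its essential class and, since $\phi'(1)=1$, recurrent at $0$; the Green-function asymptotic just derived is precisely the hypothesis imposed in that reference. Writing $L_n^0(\bar S)=\sum_{k=1}^n {\bf 1}_{\{\bar S_k=0\}}$, Chen's theorem yields, for any $b_n\to\infty$ with $b_n/n\to 0$,
\[
\lim_{n\to+\infty}\frac{1}{b_n}\ln\bfP\!\left(L_n^0(\bar S)>\lambda\, a(n/b_n)\,b_n\right)=-(1-p)\!\left(\frac{p^p\lambda}{\Gamma(p+1)}\right)^{1/(1-p)},
\]
which is the formula stated in the lemma.

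The main obstacle is the Tauberian bookkeeping rather than any deep new argument: Karamata's theorem is standard, but the constants and exponents must be tracked carefully so that the resulting $a$ fits Chen's framework, and one must notice that the role of $\alpha$ interchanges between the right- and left-continuous settings (producing $p=1-\alpha$ in one case and $p=\alpha$ in the other). Once $a$ is pinned down, the moderate deviation part is a quotation of \cite[Thm.~2]{C01}; it is exactly this single lemma, applied with the two values of $p$, that will yield both branches of Theorem~\ref{thmmdp-1} in Section~5 upon the identification $\lambda\, a(n/b_n)\,b_n = x\, n^{1-\alpha}c_n^{\alpha}$ (resp.\ $x\,n^{\alpha}c_n^{1-\alpha}$) and routine simplification of the constants.
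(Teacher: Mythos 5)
Your proposal is correct and matches the paper's treatment: the lemma is presented there purely as ``a special presentation of Chen [Theorem 2]'' with no proof given, and the verification of its hypotheses --- the renewal identity $\sum_k u_k s^k = 1/(1-f_{_0}(s))$, the asymptotics from Lemma~\ref{mdplem-1}, and Karamata's Tauberian theorem yielding $a(n)$ with index $p=1-\alpha$ (right-continuous) or $p=\alpha$ (left-continuous) --- is exactly what the paper carries out in the proof of Theorem~\ref{thmmdp-1}. You have merely folded that verification into the lemma itself (note that, as stated, the conditions on $a$ are hypotheses rather than conclusions), which is an organizational rather than a mathematical difference.
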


\medskip

Now we provide the proof of the MDP for $A_n$.

\medskip

\noindent {\bf Proof of Theorem \ref{thmmdp-1}} (1) By some basic theory in Markov chains, we can get
 $$\sum_{n=0}^{+\infty} \bfP(\bar{S}_n=0|\bar{S}_{_0}=0)s^n=\frac{1}{1-f_{0}(s)}$$
for any $s\in[0, 1)$. Therefore, from Lemma \ref{mdplem-1}, we know that as $s\to 1-$,
 $$\sum_{n=0}^{+\infty} \bfP(\bar{S}_n=0|\bar{S}_{_0}=0)s^n\sim \frac{(1-\alpha)c}{q}(1-s)^{\alpha-1},$$
where $q=p_{_{-1}}$ in the setting of Theorem \ref{thmmdp-1}. By Tauberian's Theorem \cite[Page 445, Theorem 2]{Feller71}, we know that
 \beqlb\label{mdp-4}
 \lim_{n\to +\infty}\frac{\sum\limits_{k=0}^n \bfP(\bar{S}_n=0|\bar{S}_{_0}=0)}{n^{1-\alpha}}=\frac{(1-\alpha)c}{q\Gamma(2-\alpha)}.
\eeqlb
 Note that $A_n=\sum\limits_{k=1}^n{\bf 1}_{\{\bar{S}_k=0\}}$. From (\ref{mdp-4}), we know that Lemma \ref{c01} is fulfilled for $$p=1-\alpha,~~a(t)=\frac{(1-\alpha)c}{q\Gamma(2-\alpha)}t^{1-\alpha}~~\text{and}~~b_n=c_n.$$ Consequently,
 \begin{align*}
&\lim_{n\to +\infty} \frac{1}{c_n}\ln \bfP(A_n\geq x n^{1-\alpha} c_n^\alpha)=\lim_{n\to +\infty} \frac{1}{c_n}\ln \bfP(A_n\geq x \left(\frac{n}{c_n}\right)^{1-\alpha} c_n)
\\=&\lim_{n\to +\infty} \frac{1}{c_n}\ln \bfP(A_n\geq \frac{x\Gamma(2-\alpha)q}{(1-\alpha)c} a(n/c_n) c_n)=-\alpha\left(\frac{xq}{(1-\alpha)^\alpha c}\right)^{1/\alpha}=-\frac{\alpha}{1-\alpha}\left(\frac{q}{c}\right)^{1/\alpha}x^{1/\alpha}.
 \end{align*}

 \medskip

(2) In this case, similarly we know that Lemma \ref{c01} is fulfilled for $$p=\alpha,~~a(t)=\frac{1}{(1-\alpha)c\Gamma(1+\alpha)}t^{\alpha}~~\text{and}~~b_n=c_n.$$
Therefore,
\begin{align*}
&\lim_{n\to +\infty} \frac{1}{c_n}\ln \bfP\left(A_n\geq x n^{\alpha} c_n^{1-\alpha}\right)=\lim_{n\to +\infty} \frac{1}{c_n}\ln \bfP\left(A_n\geq x \left(\frac{n}{c_n}\right)^{\alpha} c_n\right)\\
=&\lim_{n\to +\infty} \frac{1}{c_n}\ln \bfP(A_n\geq {x\Gamma(1+\alpha)(1-\alpha)c}a(n/c_n) c_n)=-\left[c(1-\alpha)^{2-\alpha}\alpha^{\alpha}\right]^{1/(1-\alpha)}x^{1/(1-\alpha)}.
\end{align*}
The proof is now completed. \qed

%From the MDP, we can get the following law of the iterated logarithm.
%
%\begin{theorem}\label{LIL-1}
%$$\limsup_{n\to +\infty}\frac{A_n}{\sqrt{n\ln\ln n}}=\frac{\pi}{2\sqrt{2}}.$$
%\end{theorem}
%
%Since Theorem \ref{LIL-1} is a straightforward application of Theorem 3 in Chen \cite{C01} to our case, we leave its proof to readers.
%
%\medskip

\begin{remark}\label{remark1}
 From Lemma \ref{lemma1}, we know that when $\sqrt{\phi^{\prime\prime}(1)}=\sigma<\infty$, Assumption (H) always holds for $\alpha=1/2$ and $c=\sqrt{2}\sigma$.
\end{remark}

\medskip

For the case of $\phi^{\prime\prime}(1)=\infty$, we have the following specific result.

\begin{lemma}\label{lemstable}
If $\phi(s)=s+\frac{\gamma}{1+\beta}(1-s)^{1+\beta}$ for some $\beta\in(0, 1)$ and $\gamma\in(0, 1)$, then Assumption (H) holds for $c=\gamma^{1/(1+\beta)}(1+\beta)^{\beta/(1+\beta)}$ and $\alpha=\beta/(1+\beta)$.
\end{lemma}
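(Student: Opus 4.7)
The plan is to extract the behavior of $1-h(s)$ as $s\to 1^-$ directly from the functional equation $h(s)=s\phi(h(s))$, then substitute into the defining quantity for Assumption (H). Note first that $\phi(0)=\frac{\gamma}{1+\beta}>0$, $\phi(1)=1$, and
\[
\phi'(s)=1-\gamma(1-s)^{\beta},
\]
so in particular $\phi'(1)=1$ and Lemma \ref{lemma1} applies, guaranteeing the existence and differentiability of $h(s)$ with $h(s)\to 1$ as $s\to 1^-$.

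Next I would extract the leading asymptotic of $1-h(s)$. Plugging the explicit form of $\phi$ into $h(s)=s\phi(h(s))$ and rearranging gives
\[
(1-s)h(s)=\frac{s\gamma}{1+\beta}\bigl(1-h(s)\bigr)^{1+\beta}.
\]
Since $h(s)\to 1$ and $s\to 1$, this yields
\[
\bigl(1-h(s)\bigr)^{1+\beta}\sim\frac{1+\beta}{\gamma}(1-s),\qquad
1-h(s)\sim\left(\frac{1+\beta}{\gamma}\right)^{1/(1+\beta)}(1-s)^{1/(1+\beta)}\quad(s\to 1^-).
\]

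Finally, I would compute
\[
1-s\phi'(h(s))=(1-s)+s\gamma\bigl(1-h(s)\bigr)^{\beta}.
\]
Substituting the asymptotic of $1-h(s)$ into the second term gives
\[
s\gamma\bigl(1-h(s)\bigr)^{\beta}\sim\gamma\cdot\left(\frac{1+\beta}{\gamma}\right)^{\beta/(1+\beta)}(1-s)^{\beta/(1+\beta)}=\gamma^{1/(1+\beta)}(1+\beta)^{\beta/(1+\beta)}(1-s)^{\beta/(1+\beta)}.
\]
Since $\beta\in(0,1)$ forces $\beta/(1+\beta)<1$, the $(1-s)$ term is of strictly smaller order and is absorbed. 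Dividing by $(1-s)^{\beta/(1+\beta)}$ and letting $s\to 1^-$ therefore produces the advertised limit with $\alpha=\beta/(1+\beta)$ and $c=\gamma^{1/(1+\beta)}(1+\beta)^{\beta/(1+\beta)}$.

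There is no real obstacle here: everything reduces to reading off exponents from the implicit relation $h(s)=s\phi(h(s))$. The only place one must be mildly careful is justifying the passage $h(s)\to 1$ and the use of elementary asymptotic equivalence, both of which are immediate from Lemma \ref{lemma1}(1).
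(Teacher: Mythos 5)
Your proof is correct. Both you and the paper start from the same rearrangement of $h(s)=s\phi(h(s))$, namely $(1-s)h(s)=\tfrac{s\gamma}{1+\beta}\bigl(1-h(s)\bigr)^{1+\beta}$, and both evaluate $1-s\phi'(h(s))=(1-s)+s\gamma\bigl(1-h(s)\bigr)^{\beta}$. Where you differ is in how the exponent is extracted: you invert the relation asymptotically once, obtaining $1-h(s)\sim\bigl(\tfrac{1+\beta}{\gamma}\bigr)^{1/(1+\beta)}(1-s)^{1/(1+\beta)}$, and substitute; the paper instead iterates the identity $\tfrac{1-s}{1-h(s)}=M(s)(1-h(s))^{\beta}$ infinitely many times, summing the geometric series of exponents $\beta-\beta^{2}+\beta^{3}-\cdots$ to arrive at the exact (not merely asymptotic) identity $1-s\phi'(h(s))=(1+\beta h(s))M(s)^{1/(1+\beta)}(1-s)^{\beta/(1+\beta)}$ valid for all $s\in(0,1)$, and only then passes to the limit. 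Your route is shorter and entirely adequate for the stated limit, since continuity of $x\mapsto x^{1/(1+\beta)}$ justifies raising an asymptotic equivalence of positive quantities to a power and the leftover $(1-s)$ term is indeed of strictly smaller order; the paper's bootstrapping buys an exact prefactor formula that is not actually needed for Assumption (H).
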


\begin{proof} From $h(s)=s\phi(h(s))$, we have $h(s)=sh(s)+\frac{s\gamma}{1+\beta}(1-h(s))^{1+\beta}$, which implies
 \beqlb\label{lemstable-1}
 \frac{1-s}{1-h(s)}=M(s)(1-h(s))^{\beta},
 \eeqlb
where $M(s)=s\gamma/[(1+\beta)h(s)]$. Therefore, for any $s\in(0, 1)$,
 \begin{align*}
 1-s\phi^\prime(h(s))&=1-s(1-\gamma(1-h(s))^\beta)=1-s+\frac{s\gamma(1-h(s))^{1+\beta}}{1-h(s)}
  \\&=1-s+\frac{(1+\beta)h(s)(1-s)}{1-h(s)}=(1+\beta h(s))\frac{1-s}{1-h(s)}.
 \end{align*}
Using (\ref{lemstable-1}) repeatedly, we get that for any $k\geq 1$,
  \beqnn
 &&1-s\phi^\prime(h(s))=(1+\beta h(s))M(s)(1-h(s))^\beta=(1+\beta h(s))M(s)\left(\frac{1-h(s)}{1-s}\right)^\beta (1-s)^\beta
 \\&&\qquad=(1+\beta h(s))M(s)M(s)^{-\beta}(1-h(s))^{-\beta^2}(1-s)^\beta
 \\&&\qquad=(1+\beta h(s))M(s)M(s)^{-\beta}\left(\frac{1-h(s)}{1-s}\right)^{-\beta^2}(1-s)^\beta(1-s)^{-\beta^2}
 \\&&\qquad=(1+\beta h(s))M(s)M(s)^{-\beta}M(s)^{\beta^2}(1-h(s))^{\beta^3}(1-s)^\beta(1-s)^{-\beta^2}
 \\&&\qquad=\cdots=(1+\beta h(s))M(s)^{1+\cdots+(-\beta)^k}(1-s)^{\beta+\cdots+(-1)^{k+1}\beta^k}(1-h(s))^{(-1)^k\beta^{k+1}}.
 \eeqnn
Since $0<\beta<1$ and $0<1-h(s)<1$ for all $s\in(0, 1)$, by letting $k\to +\infty$, we have
 $$M(s)^{1-\beta+\cdots+(-1)^k\beta^k}\to M(s)^{1/(1+\beta)}, \;\; (1-h(s))^{(-1)^k\beta^{k+1}}\to 1 $$
$$\text{and}~~(1-s)^{\beta-\beta^2+\cdots+(-1)^{k+1}\beta^k}\to (1-s)^{\beta/(1+\beta)}.$$
Consequently, $\displaystyle{1-s\phi^\prime(h(s))=(1+\beta h(s))M(s)^{1/(1+\beta)}(1-s)^{\beta/(1+\beta)}}$, which implies that
  \beqnn
 \lim_{s\to 1-}\frac{1-s\phi^\prime(h(s))}{(1-s)^{\beta/(1+\beta)}}=\lim_{s\to 1-}(1+\beta h(s))M(s)^{1/(1+\beta)}=\gamma^{1/(1+\beta)}(1+\beta)^{\beta/(1+\beta)},
 \eeqnn
here we use the fact that $h(s)\to 1$ and $M(s)\to\frac{\gamma}{1+\beta}$ as $s\to 1-$. Therefore, Assumption (H) holds for $c=\gamma^{1/(1+\beta)}(1+\beta)^{\beta/(1+\beta)}$ and $\alpha=\beta/(1+\beta)$.\qed
\end{proof}

From Theorem \ref{thmmdp-1}, Remark \ref{remark1} and Lemma \ref{lemstable}, we can get the Corollaries \ref{thmmdp-2} and \ref{thmmdp-3}, where the fact $q=\gamma/(1+\beta)$ is used for the latter case. The details are omitted.\qed

\section{Concluding remarks}

In this paper, we prove the large and the moderate deviations principle for two kinds of non--nearest neighbor random walks, that is, the left continuous and the right continuous random walks. As implied by our main results~(Theorem \ref{ldp} and Theorem \ref{thmmdp-1}), the form of the asymptotic behavior is different among the left continuous case, the right continuous case, and the nearest--neighbor case. This implies that the traditional method by utilizing the strong invariance principle and relating random walks with Brownian motions may not work for the current cases. Instead, the new approach of linking random walks to some queueing models helps to overcome the above difficulties.

One of the future direction is to extend the results to more general kinds of transition probabilities~(beyond the left continuous and the right continuous setting). However, the current approach may fail since the relation with the queueing models~(as displayed in Section 4) may become invalid for the more general cases. We may think about other estimating approaches in the future.

Another interesting problem is to consider the high dimensional case. This may be much more difficult. Very recently, Godr\`{e}che and Luck \cite{GL2021} considered the two--dimensional case, and they only got the law of large numbers for the nearest--neighbor case.

\bigskip

\textbf{Acknowledgment.} We would like to thank the anonymous referee for the useful suggestions,
which help a lot to improve the manuscript. The project was supported by the National Natural Science Foundation of China~(Grant No. 11671145) and the Science and Technology Commission of Shanghai Municipality~(Grant No. 18dz2271000)

\bigskip


\begin{thebibliography}{99}

\renewcommand{\baselinestretch}{1.0}

%\small

\footnotesize

%\bibitem{A2012}
%Aigner, M. Combinatorial Theory-Reprint of the 1979 edition, Springer-Verlag Berlin Heidelberg 1997.
%\bibitem{BMS04}
%Becker-Kern, P., Meerschaert, M, and Scheffler, H. Limit theoremsnfor coupled continuous time random walks. {\it Ann. Probab.} 2004, 32: 730-756.
%\bibitem{B2018}
%Berger, Q. Notes on random walks in the Cauchy domain of attraction. {\it Probab. Theory Relat. Fields} 2018. https://doi.org/10.1007/s00440-018-0887-0.

\bibitem{BM2015}
Borovkov, A. and Mogulskii, A. Large deviation principles for trajectories of compound renewal processes. I, II. {\it Theory, Probab, Appl.} 2015, 60: 207-224, 349-366.

%\bibitem{BSW94}
%Brands, J., Steutel F. and Wilms R., On the number of maxima in a discrete sample, {\it Statist. Probab. Lett.}  1994, 20: 209-217.

%\bibitem{BP68}
%Billingsley, P. {\it Convergence of Probability Measures}, Wiley, New York, 1968.

\bibitem{CD2010}
Chaumont, L. and Doney, R. A. Invariance principles for local times at the supremum of random walks and L\'{e}vy processes, {\it Ann. Probab.} 2010, 38: 1368-1389.

\bibitem{C01}
Chen, X. Moderate deviations for Markovian occupation times. {\it Stoch. Proc. Appl.} 2001, 94: 51-70.

\bibitem{C2007}
Chi, Z. Uniform convergence of exact large deviations for renewal reward processes. {\it Ann. Appl. Probab.} 2007, 17(3): 1019-1048.

\bibitem{DZ98}
Dembo, A and Zeitouni, O. {\it Large Deviations Techniques and Application (Second Edition)}, Springer-Verlag, New York, 1998.

\bibitem{Feller71}
Feller, W. An Introduction to Probability theory and its application Vol II, Wiley, New York, 1971.

%\bibitem{F2007}
%Frolov, A. Martikainen, A and Steinebach, J.  On probabilities of small deviations for compound renewal processes. {\it Theory Probab. Appl.} 2008, 52(2): 328-337.  Russian version: 2007, 52(2): 366-375.

%\bibitem{F2008}
%Frolov, A. On probabilities of small deviations for compound Cox processes, {\it J. Math. Sci.} 2007, 145(2): 4931-4937.

\bibitem{GZ98}
Gantert, N. and Zeitouni, O. Large and moderate deviations for the local time
of a recurrent Markov chain on $\ZZ^2$. {\it Annales de l'I. H. P. (B)}, 1998, 34(5), 687-704.


\bibitem{GW94}
Glynn, P. and Whitt, W. Large deviations behavior of counting processes and their inverses. {\it Queueing Systems} 1994, 17: 107-128.

%\bibitem{GMS2017}
%Godr\`{e}che, C. Majumdar, S. N. and Schehr, G.  Record statistics of a strongly correlated time series: random walks and L谷vy flights {\it J. Phys. A: Math. Theor.} 2017, 50: 333001

\bibitem{GL2021}
Godr\`{e}che, C. and Luck, G-M.  On sequences of records generated by planar random walks. {\it J. Phys. A: Math. Theor.} 2021, 54: 325003.

\bibitem{GOT82}
Greenwood, P., Omey, E. and Teugels, J. L.  Harmonic renewal measures and bivariate domains of attraction in fluctuation theory. {\it Z. Wahrsch. Verw. Gebiete} 1982, 61: 527-539.

%\bibitem{LV10}
%Gregory, F. Lawler and VLada, Limic. Random Walk: A Modern Introduction. Cambridge University Press, New YorK, 2010.

\bibitem{J1994}
Jiang, T. F. Large deviations for renewal processes, {\it Stoch. Proc. Appl.} 1994, 50: 57-71.

\bibitem{KT81}
Karlin S, Taylor H. M.  A second course in stochastic processes. Academic Press, New York, 1981.

\bibitem{KP92}
Katzenbeisser, W. and Panny, W. On the number of times where a simple random walk reaches its maximum. {\it J. Appl. Probab.} 1992, 29(2): 305-312.

%\bibitem{KNN97}
%Khmaladze, E., Nadareishvili, M. and Nikabadze, A. Asymptotic behaviour of a number of repeated records. {\it Statist. Probab. Lett.} 1997, 35: 49-58

\bibitem{KP96}
Kirschenhofer, P. and Prodinger, H. Return statistics of simple random walks. {\it J. Statist. Plan. Inference} 1996, 54: 67-74.

\bibitem{LMZ11}
Lefevere, R., Mariani, M. and Zambotti, L. Large deviations for renewal processes. {\it Stoch. Proc. Appl.} 2011, 121: 2243-2271.

%\bibitem{L2016}
%Lifshits, M.  Bibliography of small deviation probabilities. 2016. Availalbe at http://www.proba.jussieu.fr/pageperso/smalldev/biblio.pdf

%\bibitem{LS05}
%Lifshits, M. and Simon,T. Small deviations for fractional stable processes. {\it Ann. I. H. Poincar谷} 2005, 41: 725-752.

\bibitem{LM2018}
Logachov, A. and Mogulskii, A. Anscombe-type theorem and moderate deviations for trajectories of a compound renewal process. {\it J. Math. Sci,} 2018, 229(1): 36-50.

%\bibitem{L2006}
%Lotov, V. On the mean value of the ladder epoch for random walks with a small drift. {\it Izvestiya: Mathematics} 2006, 70: 1225-1232.

%\bibitem{M2010}
%Majumdar, S. N. Universal first-passage properties of discrete-time random walks and L\'{e}vy flights on a line: Statistics of the global maximum and records. {\it Physica A}. 2010, 389: 4299-4316.

%\bibitem{MZ2008}
%Majumdar, S. N and Ziff, R. M. Universal Record Statistics of Random Walks and L\'{e}vy Flights. {\it Phys. Rev. Lett.} 2008, 101: 050601.

%\bibitem{PL98}
%Pakes, A. G. and Li, Y. Limit laws for the number of near maxima via the Poisson approximation. {\it Statist. Probab. Lett.} 1998, 40: 395-401.

\bibitem{P2007}
P\u{a}tt\u{a}nea, E. On the asymptotic behavior of the number of maximum points of a simple random walk. {\it Carpathian J. Math.} 2007, 23: 156 -164.

\bibitem{SR74}
Serfozo R. Large deviations of renewal processes. {\it Stoch. Process. Appl.} 1974, 2(3): 295-301.

\bibitem{SK57}
Skorohod, A. V. Limit theorems for stochastic processes with independent increments.
Theory Probab. Appl. 1957, 2: 138-171.

\bibitem{sp64}
Spitzer, F. Principles of Random Walk, Van Nonstrand-Reinhold, New York, 1964.


%\bibitem{TS07}
%Terpstra, J. T and Schauer, N. D.  A Simple Random Walk Model for Predicting Track and Field World Records. {\it J. Quantitative Analysis in Sports}.  2007, 3(3): Art. 4.


\bibitem{T2013}

Tsirelson, B. From uniform renewal theorem to uniform large and moderate deviations for renewal-reward processes. {\it Electron. Comm. Probab.} 2013, 18(52): 1-13.

%\bibitem{U2018}
%Uchiyama, K. On the ladder heights of random walks attracted to stable laws of exponent. {\it Electron. Comm. Probab.} 2018, 23(23): 1-12.

%\bibitem{U2011}
%Uchiyama, K. A note on summability of ladder heights and the distributions of ladder epochs for random walks. {\it Stoch. Process. Appl.} 2012, 121: 1938-1961.
%
%\bibitem{W2009}
%Wachtel, Vitali. Transition phenomena for ladder epochs of random walks with small negative drift. {\it Adv. Appl. Probab.} 2009, 41: 1189-1214.

\end{thebibliography}
\end{document}